\documentclass[12pt]{article}
\usepackage{amsmath}
\usepackage{amssymb}
\usepackage{amsmath,amssymb,amsbsy,amsfonts,amsthm,latexsym,
amsopn,amstext,amsxtra,euscript,amscd}

\begin{document}

\def\A{\mathbb{A}}
\def\B{\mathbf{B}}
\def \C{\mathbb{C}}
\def \F{\mathbb{F}}
\def \K{\mathbb{K}}

\def \Z{\mathbb{Z}}
\def \P{\mathbb{P}}
\def \R{\mathbb{R}}
\def \Q{\mathbb{Q}}
\def \N{\mathbb{N}}
\def \Z{\mathbb{Z}}

\def\B{\mathcal B}
\def\e{\varepsilon}

\def\cA{{\mathcal A}}
\def\cB{{\mathcal B}}
\def\cC{{\mathcal C}}
\def\cD{{\mathcal D}}
\def\cE{{\mathcal E}}
\def\cF{{\mathcal F}}
\def\cG{{\mathcal G}}
\def\cH{{\mathcal H}}
\def\cI{{\mathcal I}}
\def\cJ{{\mathcal J}}
\def\cK{{\mathcal K}}
\def\cL{{\mathcal L}}
\def\cM{{\mathcal M}}
\def\cN{{\mathcal N}}
\def\cO{{\mathcal O}}
\def\cP{{\mathcal P}}
\def\cQ{{\mathcal Q}}
\def\cR{{\mathcal R}}
\def\cS{{\mathcal S}}
\def\cT{{\mathcal T}}
\def\cU{{\mathcal U}}
\def\cV{{\mathcal V}}
\def\cW{{\mathcal W}}
\def\cX{{\mathcal X}}
\def\cY{{\mathcal Y}}
\def\cZ{{\mathcal Z}}

\def\f{\frac{|\A||B|}{|G|}}
\def\AB{|\A\cap B|}
\def \Fq{\F_q}
\def \Fqn{\F_{q^n}}

\def\({\left(}
\def\){\right)}
\def\fl#1{\left\lfloor#1\right\rfloor}
\def\rf#1{\left\lceil#1\right\rceil}
\def\Res{{\mathrm{Res}}}

\newcommand{\comm}[1]{\marginpar{
\vskip-\baselineskip \raggedright\footnotesize
\itshape\hrule\smallskip#1\par\smallskip\hrule}}

\newtheorem{lem}{Lemma}
\newtheorem{lemma}[lem]{Lemma}
\newtheorem{prop}{Proposition}
\newtheorem{proposition}[prop]{Proposition }
\newtheorem{thm}{Theorem}
\newtheorem{theorem}[thm]{Theorem}
\newtheorem{cor}{Corollary}
\newtheorem{corollary}[cor]{Corollary}
\newtheorem{prob}{Problem}
\newtheorem{problem}[prob]{Problem}
\newtheorem{ques}{Question}
\newtheorem{question}[ques]{Question}
\newtheorem{rem}{Remark}

\title{Congruences involving product of intervals and sets with small multiplicative doubling modulo a prime and applications}

\author{
{\sc J. Cilleruelo} and {\sc M.~Z.~Garaev} }

\date{}

\maketitle

\begin{abstract}
In the present paper we obtain new upper bound estimates for the
number of solutions of the congruence
$$
x\equiv y r\pmod p;\quad x,y\in \mathbb{N},\quad x,y\le H,\quad
r\in\cU,
$$
for certain ranges of $H$ and $|\cU|$, where $\cU$ is a subset of
the field of residue classes modulo $p$ having small multiplicative
doubling. We then use this estimate to show that the number of
solutions of the congruence
$$
x^n\equiv \lambda\pmod p; \quad  x\in \N,  \quad L<x<L+p/n,
$$
is at most $p^{\frac{1}{3}-c}$ uniformly over positive integers $n,
\lambda$ and $L$, for some absolute constant $c>0$. This implies, in
particular, that if $f(x)\in \Z[x]$ is a fixed polynomial without
multiple roots in $\C$, then the congruence $ x^{f(x)}\equiv 1\pmod p,
\,x\in \mathbb{N}, \,x\le p,$ has at most $p^{\frac{1}{3}-c}$
solutions as $p\to\infty$, improving some recent results of
Kurlberg, Luca and Shparlinski and of Balog, Broughan and
Shparlinski. We use our results to show that almost all the residue
classes modulo $p$ can be represented in the form $xg^y \pmod p$
with positive integers $x<p^{5/8+\varepsilon}$ and $y<p^{3/8}$. Here
$g$ denotes  a primitive root modulo $p$. We also prove that almost
all the residue classes modulo $p$ can be represented in the form
$xyzg^t \pmod p$ with positive integers
$x,y,z,t<p^{1/4+\varepsilon}$.
\end{abstract}

\section{Introduction}

In what follows, $\varepsilon$ is a small fixed positive quantity,
$\F_p$ is the field of residue classes modulo a prime number $p$,
which we consider to be sufficiently large  in terms of
$\varepsilon$. The notation $A\lesssim B$ is used to denote that
$|A|<|B|p^{o(1)}$, or equivalently,  for any $\varepsilon>0$ there
is a constant $c=c(\varepsilon)$ such that
$|A|<c|B|p^{\varepsilon}$. Given sets $\cA$ and $\cB$ their
product-set $\cA\cdot\cB$ is defined by
$$
\cA\cdot\cB=\{ab;\quad a\in \cA,\, b\in \cB\}.
$$

The distributional properties of powers of a primitive root modulo
$p$ and subgroups of $\F_p^*$ has a long story, starting from the
work of Vinogradov~\cite{Vin} in 1926. A substantial amount of
information and results can be found in the book of Konyagin and
Shparlinski~\cite{KSh}. In the present paper we continue the
investigation on this topic. Our first result is closely related to
the work of Bourgain, Konyagin and Shparlisnki~\cite{BKSh} and to
some results of Konyagin and Shparlinski from~\cite{KSh}.

\begin{theorem}
\label{thm:Main0} Let $H$ be a positive integer  and let $\cU\subset
\F_p^*$ be such that
$$
|\cU\cdot\cU|< 10|\cU|.
$$
Denote by  $J$ the number of solutions of the congruence
\begin{equation}
\label{eqn:x=yg^z}
x\equiv y r\pmod p;\quad x,y\in \mathbb{N},\quad x,y\le H,\quad r\in\cU.
\end{equation}
Then the following two assertions hold:
\begin{itemize}
\item[(i).] If for some positive integer constant $n$ we have
$$
|\cU|<p^{n/(2n+1)},\quad |\cU|H^n<p,
$$
 then $J\lesssim H$.
 \item[(ii).] If $|\cU|<p^{2/5}$, then
$$
J\lesssim H+\frac{|\cU|H^2}{p}+\frac{|\cU|^{3/4}H}{p^{1/4}}.
$$
\end{itemize}
\end{theorem}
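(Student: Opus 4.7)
The plan is to exploit the small multiplicative doubling of $\cU$ via an $n$-th moment argument combined with the Plünnecke--Ruzsa inequality. The hypothesis $|\cU\cdot\cU|<10|\cU|$, via Plünnecke--Ruzsa, yields $|\cU^{(k)}|\lesssim|\cU|$ for every fixed $k$, where $\cU^{(k)}:=\cU\cdot\cU\cdots\cU$ ($k$ factors). The pivotal observation is that if $(x_i,y_i,r_i)_{i=1}^{k}$ are $k$ solutions of \eqref{eqn:x=yg^z}, then $X:=\prod_i x_i$, $Y:=\prod_i y_i$, $R:=\prod_i r_i$ satisfy $X\equiv YR\pmod p$ with $X,Y\le H^k$ and $R\in\cU^{(k)}$; this reduces an $n$-th moment of $J$ to a counting problem with enlarged parameters $(H^n,\cU^{(n)})$.

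Writing $J=\sum_{r\in\cU}N(r)$ with $N(r):=\#\{(x,y)\in[1,H]^2:x\equiv yr\pmod p\}$, I would view $J^n$ as counting $n$-tuples of solutions. The map $(x_i,y_i)_i\mapsto(\prod x_i,\prod y_i)$ has fibres of size at most $d_n(X)\,d_n(Y)=H^{o(1)}$ (the $n$-dimensional divisor bound for $X,Y\le H^n$), whence
\[
J^n\;\lesssim\;\tilde J\;:=\;\#\bigl\{(X,Y)\in[1,H^n]^2:XY^{-1}\bmod p\in\cU^{(n)}\bigr\}.
\]
Assertion (i) thus reduces to the inequality $\tilde J\lesssim H^n$, and (ii) to a refinement of this estimate.

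To bound $\tilde J$ I would decompose $\tilde J=\sum_{R\in\cU^{(n)}}M(R)$ with $M(R):=\#\{(X,Y)\in[1,H^n]^2:X\equiv YR\pmod p\}$ and expand each $M(R)$ by a discrete Fourier identity on $\F_p$. The main (``random'') term contributes $|\cU^{(n)}|H^{2n}/p\lesssim|\cU|H^n$, which is $\lesssim H^n$ exactly under the hypothesis $|\cU|H^n<p$. The error term reduces to a bilinear character sum of the shape $\sum_{R\in\cU^{(n)}}\bigl|\sum_{Y\le H^n}e_p(aYR)\bigr|$; its treatment is the technical heart of the proof, and the condition $|\cU|<p^{n/(2n+1)}$ is calibrated precisely so that the cancellation obtained from the Plünnecke-controlled structure of $\cU^{(n)}$ yields savings matching the target $H^n$.

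For assertion (ii) the same framework specialises to $n=2$, in which case $|\cU^{(2)}|=|\cU\cdot\cU|<10|\cU|$ holds without appealing to Plünnecke. Refining the Fourier analysis while keeping three contributions---the trivial diagonal $H$, the main Fourier term $|\cU|H^2/p$, and an off-diagonal energy term obtained from Cauchy--Schwarz together with the multiplicative-energy bound $E_\times(\cU)\gtrsim|\cU|^3$ (again a consequence of small doubling)---produces the announced three-term bound. The main obstacle throughout is the bilinear character sum estimate for $M(R)$: a naive pointwise bound loses a factor of $|\cU|$, and closing this gap demands a genuinely additive-combinatorial input on $\cU^{(n)}$, for which small multiplicative doubling is precisely the right hypothesis.
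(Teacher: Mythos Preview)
Your moment reduction $J^{n}\lesssim\tilde J$ via the divisor bound is valid, but the subsequent Fourier step is where the argument breaks. Expanding $\tilde J=\sum_{R\in\cU^{(n)}}M(R)$ by additive characters on $\F_p$ produces an off-diagonal term of the shape $\sum_{a\neq 0}\sum_{R\in\cU^{(n)}}\overline{S(a)}\,S(aR)$ with $S(a)=\sum_{X\le H^{n}}e_p(aX)$, and to close the argument you would need cancellation in $\sum_{R}e_p(aYR)$. Small \emph{multiplicative} doubling of $\cU$ says nothing about its \emph{additive} behaviour in $\F_p$; for instance $\cU^{(n)}$ could be a coset of a multiplicative subgroup of size $\approx|\cU|$, where no nontrivial additive character bound is available in this range. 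Absent such cancellation, the best you can extract is the pointwise bound $M(R)\ll H^{n}+H^{2n}/p$ (split by $\gcd(X,Y)$ and apply Lemma~\ref{lem:x=sy geometry}), which after summing over $R$ yields only $\tilde J\lesssim|\cU|H^{n}$ and hence $J\lesssim|\cU|^{1/n}H$ --- off from the target by exactly the factor $|\cU|^{1/n}$ you were hoping Fourier would remove. The energy remark in your sketch of~(ii) has the same defect: $E_{\times}(\cU)\gtrsim|\cU|^{3}$ is a lower bound and gives no additive savings.

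The paper's argument avoids Fourier entirely and recovers this missing factor by passing to \emph{coprime} pairs. One writes $J=\sum_{d\le H}J_{1}(H/d,\cU)$ with $d=\gcd(x,y)$, so that $J_{1}(H/d,\cU)=|\cJ_{d}|$ is the cardinality of a \emph{set} $\cJ_{d}\subset\cF_{H/d}$ of reduced Farey fractions with residue in $\cU$. The key input is Lemma~\ref{lem:BKSh} (the Bourgain--Konyagin--Shparlinski product-set lemma for Farey fractions), giving $|\cJ_{d}|^{m}\lesssim|\cJ_{d}^{(m)}|$. Since $\cJ_{d}^{(m)}$ consists of reduced fractions $u/v$ with $u,v\le(H/d)^{m}$ and $u/v\pmod p\in\cU^{(m)}$, Pl\"unnecke plus Lemma~\ref{lem:x=sy geometry} (applied once per element of $\cU^{(m)}$, now contributing $O(1)$ each thanks to coprimality) give
\[
J_{1}(H/d,\cU)\lesssim|\cU|^{1/m}+\frac{|\cU|^{1/m}(H/d)^{2}}{p^{1/m}}.
\]
The first term is $|\cU|^{1/m}$, not $|\cU|^{1/m}H$; one then chooses $m$ minimal with $|\cU|<(H/d)^{m}$, and the hypotheses $|\cU|<p^{n/(2n+1)}$, $|\cU|H^{n}<p$ force $(H/d)^{m}<p/|\cU|$, so both terms are $\lesssim H/d$. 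Part~(ii) comes from a dyadic decomposition in $d$ and interpolating the above bound between $m=1$ and $m=2$.
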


\begin{corollary}
\label{cor:Main1} Let $H$ be a positive integer  and let $\cU\subset
\F_p^*$ be such that
$$
|\cU\cdot\cU|< 10|\cU|.
$$
Denote by  $J$ the number of solutions of the congruence
\begin{equation}
\label{eqn:xg^z=xg^z}
xr\equiv x_1r_1\pmod p;\quad x,x_1\in\N, \quad x,x_1\le H,\quad r,r_1\in\cU.
\end{equation}
Then the following two assertions hold:
\begin{itemize}
\item[(i).] If for some positive integer constant $n$ we have
$$
|\cU|<p^{n/(2n+1)},\qquad |\cU|H^n<p,
$$
 then $J\lesssim H|\cU|$.
 \item[(ii).] If $$
p^{1/3}<|\cU|<p^{2/5},\qquad |\cU|H<p,
$$
then
$$
J\lesssim \frac{|\cU|^{7/4}H}{p^{1/4}}.
$$
\end{itemize}
\end{corollary}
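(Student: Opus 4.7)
My plan is to reduce Corollary~\ref{cor:Main1} to Theorem~\ref{thm:Main0} by the substitution $v = r_1 r^{-1}$, which rewrites $xr \equiv x_1 r_1 \pmod{p}$ as $x \equiv v x_1 \pmod{p}$ with $v$ ranging over $\cV := \cU \cdot \cU^{-1}$. First I would apply the Plünnecke--Ruzsa inequalities in the abelian group $(\F_p^*,\cdot)$ to show, from $|\cU \cdot \cU| < 10|\cU|$, that $|\cU| \le |\cV| \le 100|\cU|$ and $|\cV \cdot \cV| = |\cU \cdot \cU^{-1} \cdot \cU \cdot \cU^{-1}| \le 10^4 |\cU| \le 10^4|\cV|$. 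Thus $\cV$ inherits multiplicative doubling bounded by an absolute constant, so Theorem~\ref{thm:Main0} applies to $\cV$ (the constant $10$ in its hypothesis being inessential; any $O(1)$ doubling constant is absorbed into $\lesssim$).

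Next, setting $n(v) := \#\{(r,r_1) \in \cU^2 : r_1 = vr\}$ and $M(v) := \#\{(x,x_1) \in \N^2 : x, x_1 \le H,\ x \equiv v x_1 \pmod{p}\}$, I would write $J = \sum_{v \in \cV} n(v) M(v)$ and use the trivial bound $n(v) \le |\cU|$ to obtain
$$J \le |\cU| \sum_{v \in \cV} M(v) = |\cU|\,\widetilde J,$$
where $\widetilde J$ is precisely the quantity estimated by Theorem~\ref{thm:Main0} when applied to $\cV$. Part (i) is then immediate: the hypotheses $|\cV| \lesssim |\cU| < p^{n/(2n+1)}$ and $|\cV| H^n \lesssim |\cU| H^n < p$ are met, so $\widetilde J \lesssim H$ and therefore $J \lesssim |\cU| H$.

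For part (ii), Theorem~\ref{thm:Main0}(ii) applied to $\cV$ yields
$$\widetilde J \lesssim H + \frac{|\cU| H^2}{p} + \frac{|\cU|^{3/4} H}{p^{1/4}},$$
whence
$$J \lesssim |\cU| H + \frac{|\cU|^2 H^2}{p} + \frac{|\cU|^{7/4} H}{p^{1/4}}.$$
The last step is to verify that the third term dominates under $p^{1/3} < |\cU|$ and $|\cU| H < p$. It beats the first because $|\cU|^{3/4} > p^{1/4}$, and it beats the second iff $|\cU| H^4 < p^3$; the two hypotheses together force $H < p^{2/3}$, so $|\cU| H^4 = (|\cU| H) \cdot H^3 < p \cdot p^2 = p^3$, and we conclude $J \lesssim |\cU|^{7/4} H / p^{1/4}$.

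The main obstacle is really the cosmetic one of confirming that Theorem~\ref{thm:Main0} is robust to the exact value of the doubling constant, which a glance at its proof (or a standard Plünnecke--Ruzsa subset argument) resolves. Otherwise the argument is a clean reduction via the ratio set $\cV$ plus the routine comparisons above.
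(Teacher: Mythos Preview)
Your reduction is correct and follows the same strategy as the paper: bound $J$ by $|\cU|$ times a quantity of the shape estimated in Theorem~\ref{thm:Main0}, then read off (i) and (ii). The verification of dominance of the $|\cU|^{7/4}H/p^{1/4}$ term in part (ii) is also correct.

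The one difference worth noting is that the paper sidesteps entirely the ``cosmetic obstacle'' you flag. Rather than passing to the ratio set $\cV=\cU\cdot\cU^{-1}$ (which inflates both the cardinality and the doubling constant and forces you to argue that Theorem~\ref{thm:Main0} is robust to this), the paper simply pigeonholes on $r$: there exists $r_0\in\cU$ with $J\le |\cU|\,J'$, where $J'$ counts solutions of $x\equiv x_1(r_0^{-1}r_1)\pmod p$ with $r_1\in\cU$. The relevant set is then $\cU'=r_0^{-1}\cU$, a multiplicative translate of $\cU$, so $|\cU'|=|\cU|$ and $|\cU'\cdot\cU'|=|\cU\cdot\cU|<10|\cU'|$ exactly. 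Theorem~\ref{thm:Main0} applies verbatim with no need to revisit its proof or invoke Pl\"unnecke--Ruzsa. Your route works, but the translate trick is cleaner.
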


We remark that in the case $\cU$ is a subgroup and $n=1$ the
statement of the part $(i)$ of our Theorem~\ref{thm:Main0}
follows from Corollary~7.7 of the aforementioned
book~\cite{KSh}.

The proof of Theorem~\ref{thm:Main0} is based on ideas and
results of Bourgain, Konyagin and Shparlinski~\cite{BKSh}.
Nevertheless, in the indicated range of parameters, the upper
bound estimate of our Theorem~~\ref{thm:Main0} improves one of
the main results of~\cite{BKSh}.

We give several new applications of Theorem~\ref{thm:Main0}. Let
$d\in\N$ and $\lambda$ be an integer coprime to $p$. For real
numbers $L$ and $N\ge 1$, consider the problem of upper bound
estimates for the number $T_p(d,\lambda, L, N)$ of solutions of
the congruence
\begin{equation}
\label{eqn:x^d=lambda}
x^d\equiv \lambda\pmod p; \quad x\in\N, \quad L+1\le x\le L+N.
\end{equation}
Trivially, for $N<p$, we have the  bound $T_p(d, \lambda, L,
N)\le \min\{d, N\}$. The problem of obtaining nontrivial upper
bounds for $T(d, \lambda, L, N)$ is of a very high interest,
with a variety of results in the literature, see, for example,
the aforementioned work~\cite{BKSh}, and more recent work of
Shkredov~\cite{Shk}. Several nontrivial results can also be
derived using the arguments from~\cite{BGKS2}. For instance, it
is possible to prove that if $N<p^{2/5}$, then one has the bound
$T_p(d, \lambda, L, N)\lesssim d^{1/2}.$ Using our
Theorem~\ref{thm:Main0} we shall obtain the following new result
on $T_p(d, \lambda, L, N)$ for any range $d,N$ with $dN<p$.

\begin{theorem}
\label{thm:x^n=lambda}  There exists an absolute constant $c>0$
such that
$$
T_p(d, \lambda, L, p/d) < p^{\frac{1}{3}-c}.
$$
\end{theorem}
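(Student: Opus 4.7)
The plan is to apply Theorem~\ref{thm:Main0} with $\cU$ equal to the subgroup $H_d:=\{r\in\F_p^*:r^d\equiv1\pmod p\}$ of $d$th roots of unity modulo $p$, which is a multiplicative subgroup and hence trivially satisfies the doubling hypothesis $|\cU\cdot\cU|=|\cU|$. Write $T=T_p(d,\lambda,L,p/d)$ and let $\cA=\{x_1,\ldots,x_T\}\subset[L+1,L+N]$ with $N=p/d$ denote the set of solutions of~\eqref{eqn:x^d=lambda}. Since $x_ix_j^{-1}\bmod p\in H_d$ for any two solutions, $\cA$ lies in a single coset of $H_d$; after replacing $d$ by $\gcd(d,p-1)$ we may assume $d\mid p-1$ and $|H_d|=d$. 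The trivial estimates $T\le d$ and $T\le N$ already yield $T<p^{1/3-c}$ whenever $d\notin[p^{1/3-c},p^{2/3+c}]$, so the real work lies in the middle range.

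In this middle range, each ordered pair $(x_i,x_j)$ contributes a triple $(x_i,x_j,x_ix_j^{-1})$ to the count $J$ of~\eqref{eqn:x=yg^z}, so $J\ge T^2$. The variables $x,y$ live in $[L+1,L+N]$ rather than $[1,N]$, but the Fourier/character-sum proof of Theorem~\ref{thm:Main0} is insensitive to the location of the interval (the relevant character sums differ only by a multiplicative phase); alternatively one picks $t\in\F_p^*$ by Dirichlet's approximation so that $|tL\bmod p|$ is small and replaces $\cA$ by $t\cA\bmod p$, which preserves the ratio structure but places the points essentially in $[1,N]$. Applying part~(ii) of Theorem~\ref{thm:Main0} with $|\cU|=d<p^{2/5}$ and $H=p/d$ yields
\[
T^2\;\lesssim\;\frac{p}{d}+\frac{p^{3/4}}{d^{1/4}},
\]
so $T\lesssim(p/d)^{1/2}+p^{3/8}d^{-1/8}$. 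Both terms fall below $p^{1/3-c}$ once $d>p^{1/3+8c}$, which handles the range $d\in(p^{1/3+8c},p^{2/5})$.

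Two sub-ranges remain. For the narrow window $d\in[p^{1/3-c},p^{1/3+8c}]$ around $p^{1/3}$ I would use part~(i) of Theorem~\ref{thm:Main0} with $n\ge2$ (so the condition $|\cU|<p^{n/(2n+1)}$ holds), subdividing $[L+1,L+p/d]$ into sub-intervals of length $H\ll(p/d)^{1/n}$ to enforce $|\cU|H^n<p$, and reassembling the per-piece bounds via Cauchy--Schwarz to save a small power of $p$ over the trivial $T\le d$. For the range $d\in[p^{2/5},p^{2/3+c}]$, where $|\cU|=d$ exceeds the threshold $p^{2/5}$ of part~(ii), I would pass to a proper subgroup $H_{d'}\subset H_d$ with $d'\approx p^{2/5}$ (taking a divisor of $d$ of the appropriate size, or invoking quotient structure of $\F_p^*$ when no convenient divisor exists), partition $\cA$ into its intersections with the $d/d'$ cosets of $H_{d'}$, and apply Theorem~\ref{thm:Main0}(ii) to each piece before recombining. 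The principal obstacle is precisely this reassembly in the large-$d$ regime: one must guarantee uniformly in $d$ that a suitable intermediate subgroup is available and that the Cauchy--Schwarz losses from partitioning do not erase the $p^c$ saved by the theorem.
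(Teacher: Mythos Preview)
Your proposal has a genuine gap at the very first substantive step. Theorem~\ref{thm:Main0} is stated (and proved) for variables $x,y$ in the \emph{initial} interval $[1,H]$, not in a translated interval $[L+1,L+N]$. Your two suggested fixes do not work. First, the proof of Theorem~\ref{thm:Main0} is not a Fourier/character-sum argument at all: it rests on Lemma~\ref{lem:BKSh} about product sets of Farey fractions and on Lemma~\ref{lem:x=sy geometry}, and both require that the reduced ratios $x/y$ lie in $\cF_H$, i.e.\ have numerator and denominator at most $H$. For $x,y\in[L+1,L+N]$ with $L$ large this fails completely. Second, the Dirichlet translation idea cannot place $t\cA\pmod p$ inside an interval of length $N$: multiplication by $t$ sends $L+k$ to $tL+tk\pmod p$, and the points $tk$ for $1\le k\le N$ are spread over $\F_p$ unless $t$ itself is tiny, in which case you have no control over $tL\bmod p$. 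So the inequality $J\ge T^2$ never connects to a statement you can bound by Theorem~\ref{thm:Main0}.

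The paper avoids this obstacle by a different mechanism. In the main range $p^{1/3+0.001}<d<p^{2/3-0.001}$ it first applies the Erd\H os--Tur\'an/Montgomery inequality (Lemma~\ref{lem:Mont}) to convert $T_p(d,\lambda,L,N)$ into an average of exponential sums $\sum_{h\in\cG_d}e_p(kx_0h)$ over $k\in[1,d]$; crucially, this Fourier variable $k$ now ranges over a genuine initial interval. One then exploits the subgroup invariance $r\cG_d=\cG_d$ to introduce an extra average over a set $\cU\subset\cG_d$ of size $\approx p^{1/3+0.001}$, and it is \emph{this} congruence $kr\equiv k_1r_1\pmod p$ with $k,k_1\le d$ to which Corollary~\ref{cor:Main1}(ii) applies. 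The two narrow boundary ranges are handled by entirely separate tools: Konyagin's bound on exponential sums over small subgroups (Lemma~\ref{lem:Kon1}) for $d$ near $p^{1/3}$, and Burgess's character sum bound (Lemma~\ref{lem:Burgess}) combined with an $m$-th moment amplification for $d$ near $p^{2/3}$. Your sketched treatment of these ranges inherits the same translated-interval problem and, as you yourself note, leaves the reassembly losses unresolved.
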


From Theorem~\ref{thm:x^n=lambda}  we can derive the following
consequence.
\begin{corollary}
\label{cor:x f(x)} Let $f(x)\in\Z[x]$ be a fixed non-constant
polynomial without multiple roots in $\C$. Then the congruence
\begin{equation}
\label{eqn:Th2}
x^{f(x)}\equiv 1\pmod p;\quad x\in \mathbb{N},\quad x\le p,
\end{equation}
has at most $p^{\frac{1}{3}-c}$ solutions as $p\to\infty$, for
some absolute constant $c>0$.
\end{corollary}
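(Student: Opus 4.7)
The plan is to reduce the corollary to Theorem~\ref{thm:x^n=lambda} by a dyadic partition of $[1,p]$ according to the size of $f(x)$. Let $n=\deg f$; the at most $n$ integer roots of $f$ trivially lie in the solution set $S=\{x\in\N : x\le p,\ x^{f(x)}\equiv 1\pmod p\}$ and contribute $O(1)$ to $|S|$, so we may restrict attention to $x$ with $f(x)\ne 0$. Since $f$ has no multiple roots in $\C$ it is piecewise monotonic with at most $n-1$ critical points, so for each dyadic $M\ge 1$ the preimage
\[
I_M=\{x\in[1,p]\cap\N : M\le |f(x)|<2M\}
\]
is a union of at most $n$ intervals, each of length $O(M^{1/n})$. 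For each $x\in S\cap I_M$ we set $d_x=\gcd(f(x),p-1)$, so that $x^{d_x}\equiv 1\pmod p$, $d_x\mid p-1$, and $d_x\le 2M$.

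Consider first the range $M\le p^{n/(n+1)}$. Each component of $I_M$ has length $O(M^{1/n})\le p/(2M)\le p/d_x$, hence lies inside a single window to which Theorem~\ref{thm:x^n=lambda} applies for any divisor $d$ of $p-1$ with $d\le 2M$: at most $p^{1/3-c}$ elements $y$ of that window satisfy $y^d\equiv 1\pmod p$. Partitioning $S\cap I_M$ by the at most $\tau(p-1)=p^{o(1)}$ possible values of $d=d_x$ yields $|S\cap I_M|\le n\cdot\tau(p-1)\cdot p^{1/3-c}=p^{1/3-c+o(1)}$, and summing over the $O(\log p)$ dyadic scales in this range still gives $p^{1/3-c+o(1)}$.

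For the complementary range $M>p^{n/(n+1)}$ — equivalently $x>p^{1/(n+1)}$ — the interval $I_M$ is longer than one Theorem~\ref{thm:x^n=lambda} window. I would cover $I_M$ by sub-windows of length $p/d$ and, in each sub-window, combine the Theorem~\ref{thm:x^n=lambda} upper bound $p^{1/3-c}$ with the polynomial constraint $d\mid f(x)$: for $p$ larger than $|\mathrm{disc}(f)|$ this confines $x$ modulo $d$ to at most $n^{\omega(d)}=d^{o(1)}$ arithmetic progressions, so the per-sub-window count is $\min\bigl(p^{1/3-c},\ d^{o(1)}(p/d^2+1)\bigr)$.

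The main obstacle is this second regime: the naive combination of Theorem~\ref{thm:x^n=lambda} per sub-window with the polynomial AP count gives only $p^{1/2+o(1)}$ when summed over $d\mid p-1$, falling short of the target $p^{1/3-c}$. To break this $\sqrt p$ barrier I would invoke the multiplicative structure of $\cU:=\{y\in\F_p^*:y^{d_x}=1\}$ — a subgroup, and in particular a set with $|\cU\cdot\cU|=|\cU|$ — to put oneself in the scope of Theorem~\ref{thm:Main0} and Corollary~\ref{cor:Main1} of the present paper, tailored to the divisor $d=d_x$; a careful balancing of parameters should then collapse the per-$d$ estimate to a total bound $p^{1/3-c'}$ for some absolute $c'>0$.
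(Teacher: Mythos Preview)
Your proposal has a genuine gap: the second regime $M>p^{n/(n+1)}$ is left unresolved, and the closing sentence (``a careful balancing of parameters should then collapse\ldots'') is a hope, not an argument. You correctly diagnose that chopping $[1,p]$ into sub-windows of length $p/d$ and intersecting with the $d^{o(1)}$ arithmetic progressions from the root-counting bound gives only $p^{1/2+o(1)}$ after summing over $d\mid p-1$; but invoking Theorem~\ref{thm:Main0} or Corollary~\ref{cor:Main1} here does not help, since those results are already what underlies Theorem~\ref{thm:x^n=lambda}, and you would simply be re-proving Theorem~\ref{thm:x^n=lambda} in each sub-window without gaining anything over the direct application.

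The idea you are missing makes the dyadic decomposition on $|f(x)|$ unnecessary. The paper's proof proceeds as follows: for each divisor $d\mid p-1$, the constraint $d\mid f(x)$ places $x$ in at most $d^{o(1)}$ residue classes modulo $d$ (Nagell--Ore, using that $f$ has no multiple roots). Fix one such class $x\equiv k_0\pmod d$ and \emph{substitute} $x=k_0+dy$ with $0\le y<p/d$. Then $x^d\equiv 1\pmod p$ becomes $(y+k_0d^{-1})^d\equiv d^{-d}\pmod p$, which is a single instance of Theorem~\ref{thm:x^n=lambda} on an interval of length $p/d$, giving at most $p^{1/3-c}$ solutions. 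Summing over the $d^{o(1)}$ classes and the $\tau(p-1)=p^{o(1)}$ divisors yields the corollary directly. The point is that the arithmetic progression modulo $d$, once reparametrised, \emph{is} a short interval of the right length; there is no need to tile $[1,p]$ by windows and no ``second regime'' at all.
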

Corollary~\ref{cor:x f(x)} improves the upper bound of the size
$p^{6/13+o(1)}$ obtained by Kurlberg, Luca and
Shparlinski~\cite{KLSh}. We remark that the upper bound of the
size $p^{1/3+o(1)}$ was known in the particular case $f(x)=x$
from the work of Balog, Broughan and Shparlinski~\cite{BBSh}.
Our result improves this too.

The constant $c$ in Theorem~\ref{thm:x^n=lambda} and
Corollary~\ref{cor:x f(x)} can easily be made explicit. In the
special case $f(x)=x$, using a different approach, in
Corollary~\ref{cor:x f(x)} we can obtain the upper bound of the
size $p^{27/82+o(1)}$. We hope to deal with these questions
elsewhere.

We shall give two more applications of Theorem~\ref{thm:Main0}.
Let $$ \cI=\{1,2,3,\ldots,H\}\pmod p$$ be an interval of $\F_p$
with $|\cI|=H$ elements. Denote by $\cG$ either a subgroup of
$\F_p^*$ or the set
$$
\{1,g, g^2,\ldots, g^{N-1}\}\pmod p
$$
with $|\cG|=N$ elements, formed with powers of a primitive root
$g$ modulo $p$.

\begin{theorem}
\label{thm:Main1} For any fixed $\varepsilon>0$, if
$|\cI|>p^{5/8+\varepsilon},\, |\cG|>p^{3/8}$, then
$$
|\cI \cdot \cG|=p+O(p^{1-\delta})
$$
for some  $\delta=\delta(\varepsilon)>0.$
\end{theorem}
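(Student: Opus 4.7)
The plan is to apply the standard second-moment reduction. Let $N(\lambda)=\#\{(x,r)\in\cI\times\cG:xr\equiv\lambda\pmod p\}$ and $J=\sum_\lambda N(\lambda)^2$. Since $\sum_\lambda N(\lambda)=|\cI||\cG|$ and $N(\lambda)=0$ for every $\lambda\notin\cI\cdot\cG$, the Cauchy--Schwarz inequality
$$
|\F_p\setminus\cI\cdot\cG|\cdot\Bigl(\tfrac{|\cI||\cG|}{p}\Bigr)^{\!2}\le\sum_\lambda\Bigl(N(\lambda)-\tfrac{|\cI||\cG|}{p}\Bigr)^{\!2}=J-\tfrac{|\cI|^2|\cG|^2}{p}
$$
reduces the theorem to showing $J\le(1+O(p^{-\delta}))|\cI|^2|\cG|^2/p$ for some $\delta=\delta(\varepsilon)>0$.

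Because $\cG$ satisfies $|\cG\cdot\cG|<10|\cG|$ (automatic for a subgroup, and $|\cG\cdot\cG|\le 2|\cG|-1$ for the consecutive-power set), Corollary~\ref{cor:Main1} applies with $\cU=\cG$ and $H=|\cI|$. Unfolding $xr\equiv x_1r_1$ via the substitution $s=r/r_1\in\cG$ shows $J=|\cG|\cdot J_{\mathrm{thm}}(\cI,\cG)$, where $J_{\mathrm{thm}}$ is the count bounded in Theorem~\ref{thm:Main0}. In the principal subcase $p^{3/8}<|\cG|<p^{2/5}$, part (ii) of Theorem~\ref{thm:Main0} gives
$$
J_{\mathrm{thm}}\lesssim |\cI|+\tfrac{|\cG|\,|\cI|^2}{p}+\tfrac{|\cG|^{3/4}|\cI|}{p^{1/4}}.
$$
The middle summand is the expected main term $|\cI|^2|\cG|/p$; the first error is a factor $p/(|\cG||\cI|)\le p^{-\varepsilon}$ below it, and the third is a factor $p^{3/4}/(|\cI||\cG|^{1/4})$ below. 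Plugging these estimates back into the second-moment bound yields
$$
|\F_p\setminus\cI\cdot\cG|\lesssim \frac{p^2}{|\cI||\cG|}+\frac{p^{7/4}}{|\cI||\cG|^{1/4}},
$$
which is $O(p^{1-\delta})$ for some $\delta=\delta(\varepsilon)>0$ whenever the hypotheses $|\cI|>p^{5/8+\varepsilon}$ and $|\cG|>p^{3/8}$ hold.

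The main obstacle is that the third error term $|\cG|^{3/4}|\cI|/p^{1/4}$ is only marginally smaller than the main term when $\varepsilon$ is tiny: a direct appeal to Theorem~\ref{thm:Main0}(ii) gives $\delta>0$ only once $|\cI|\,|\cG|^{1/4}>p^{3/4+\delta}$, which becomes delicate near $\varepsilon=1/32$. For smaller $\varepsilon$ I would partition $\cI$ into sub-intervals $\cI_k$ of length $H_0\le(p/|\cG|)^{1/2}$, apply Theorem~\ref{thm:Main0}(i) with $n=2$ to each piece to obtain $J_{\mathrm{thm}}(\cI_k)\lesssim H_0$, and then reassemble via the Fourier identity $\widehat N(t)=\sum_k\widehat{N_k}(t)$, exploiting the geometric-series cancellation in $\sum_k e_p(a_k tr)$ over the starting points $a_k$ of the sub-intervals. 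The complementary regime $|\cG|\ge p^{2/5}$ is not directly covered by Theorem~\ref{thm:Main0}, but in that range the classical subgroup-Gauss-sum bounds (Heath-Brown--Konyagin or Bourgain--Glibichuk--Konyagin) combine with the same second-moment scheme to give the required saving on $J$.
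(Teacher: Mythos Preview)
Your direct second-moment approach has a genuine gap: it fails for all $\varepsilon<1/32$. As you yourself compute, the third term in Theorem~\ref{thm:Main0}(ii) is smaller than the main term by a factor $p^{3/4}/(|\cI|\,|\cG|^{1/4})$, and at the borderline $|\cI|=p^{5/8+\varepsilon}$, $|\cG|=p^{3/8}$ this factor equals $p^{1/32-\varepsilon}$, which is \emph{larger} than $1$ for $\varepsilon<1/32$. So the bound $|\F_p\setminus\cI\cdot\cG|\lesssim p^{7/4}/(|\cI|\,|\cG|^{1/4})$ is then worse than trivial, not $O(p^{1-\delta})$. Your proposed repair --- partitioning $\cI$ into pieces $\cI_k$ of length $H_0$, bounding each $J_{\mathrm{thm}}(\cI_k)$ via part (i), and ``reassembling via the Fourier identity'' with ``geometric-series cancellation in $\sum_k e_p(a_k t r)$'' --- does not constitute an argument. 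The quantity $J=\sum_\lambda N(\lambda)^2$ is a multiplicative energy; it has no natural additive-character expansion in which a geometric series over the shifts $a_k$ would appear, and expanding $J=\sum_{k,l}\langle N_k,N_l\rangle$ gives nonnegative off-diagonal terms with no cancellation. Nothing here bridges the $p^{1/32}$ shortfall.

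The paper's proof is structurally different and avoids applying Theorem~\ref{thm:Main0} with the full interval $\cI$. Instead it represents a dense subset of $\cI$ as products $z\cdot x\cdot q$ with $z\le L\approx p^{\varepsilon}$, $x\le H\approx p^{1/4+\varepsilon}$, and $q$ prime of size $\approx p^{3/8-\varepsilon/2}$, and passes to a subset $\cG'\subset\cG$ of size $\approx p^{3/8-\varepsilon/2}$. The short variable $z$ is used as an amplifier: characters are split into a small set $\cX_1$ on which $|\sum_{z\le L}\chi(z)|$ is large (handled by Burgess applied to the $x$-sum, since $|\cX_1|$ is provably tiny) and the complement $\cX_2$ (where the $z$-sum itself gives a saving). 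On $\cX_2$ one applies Cauchy--Schwarz and a tailored lemma (Lemma~\ref{lem:Main2}) bounding the energy of $\{xqr\}$; the prime $q$ is introduced precisely so that the coprimality trick of Lemma~\ref{lem:simple u/v=a} handles the off-diagonal contribution. None of this machinery is visible in your sketch, and some device of this kind is needed to close the $p^{1/32}$ gap.
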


\begin{theorem}
\label{thm:Main2} For any fixed $\varepsilon>0$, if
$|\cI|>p^{1/4+\varepsilon},\, |\cG|>p^{1/4}$, then
$$
|\cI \cdot \cI\cdot \cI\cdot \cG|=p+O(p^{1-\delta})
$$
for some  $\delta=\delta(\varepsilon)>0.$
\end{theorem}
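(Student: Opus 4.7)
The plan is a Cauchy--Schwarz / second moment argument for the representation function of $\cI\cdot\cI\cdot\cI\cdot\cG$, feeding the resulting congruence count into Theorem~\ref{thm:Main0}. Define
\[
J(a)=\#\{(x_1,x_2,x_3,b)\in\cI^3\times\cG\colon x_1x_2x_3 b\equiv a\pmod p\},\qquad T=|\cI|^3|\cG|=\sum_a J(a).
\]
By Cauchy--Schwarz $|\cI\cdot\cI\cdot\cI\cdot\cG|\ge T^2/T_2$ with $T_2=\sum_a J(a)^2$, so the theorem reduces to the tight asymptotic $T_2\le(T^2/p)(1+O(p^{-\delta}))$.

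Unfolding $T_2$ as the number of solutions of $x_1x_2x_3 b\equiv y_1y_2y_3 b'\pmod p$ with $x_i,y_j\in\cI$ and $b,b'\in\cG$, and setting $r=b'b^{-1}$, the set $\cU$ of admissible $r$'s equals $\cG$ when $\cG$ is a subgroup and $\cG\cdot\cG^{-1}$ when $\cG$ is a geometric progression; in both cases $|\cU|\le 2|\cG|$ and $|\cU\cdot\cU|<10|\cU|$, so $\cU$ meets the small-doubling hypothesis of Theorem~\ref{thm:Main0}. Collecting the $(b,b')$ fibres yields $T_2\le|\cG|\cdot N$, where
\[
N=\#\{(x_i,y_j,r)\in\cI^6\times\cU\colon x_1x_2x_3\equiv r y_1y_2y_3\pmod p\}.
\]

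Fixing four of the six $\cI$-variables, say $(x_2,x_3,y_2,y_3)$, and letting $c=y_2y_3/(x_2x_3)$, the inner sum over $(x_1,y_1,r)$ becomes the count of $(x_1,y_1,r')\in\cI^2\times c\cU$ with $x_1\equiv y_1r'\pmod p$. The coset $c\cU$ has the same cardinality and doubling as $\cU$, so Theorem~\ref{thm:Main0}(i) applies for an appropriate $n\in\{1,2\}$ (ensuring $|\cU|<p^{n/(2n+1)}$ and $|\cU||\cI|^n<p$, which hold in the relevant parameter range since $|\cG|\approx p^{1/4}$ and $|\cI|\approx p^{1/4+\varepsilon}$), yielding a bound of $\lesssim|\cI|$ on the inner count. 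Summing over the $|\cI|^4$ choices of the fixed tuple gives $N\lesssim|\cI|^5$ and hence $T_2\lesssim|\cG||\cI|^5$.

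The main obstacle is that this crude bound exceeds the target main term $T^2/p=|\cI|^6|\cG|^2/p$ by a factor of $p/(|\cI||\cG|)$, because Theorem~\ref{thm:Main0} is saturated on the diagonal $r'=1$ whereas the expected main term is carried by generic off-diagonal $r'$. To extract the precise asymptotic I expand
\[
N=\frac{|\cU|}{p-1}\sum_{\chi\in\cU^\perp}|S_\chi|^6,\qquad S_\chi=\sum_{x\in\cI}\chi(x),
\]
(in the subgroup case; a Fourier version on $\F_p^*$ suffices for a geometric progression). The principal character $\chi_0$ contributes exactly $|\cU||\cI|^6/(p-1)$, which together with the outer factor $|\cG|$ reproduces $T^2/p$. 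Bounding the non-principal sum $\sum_{\chi\in\cU^\perp\setminus\chi_0}|S_\chi|^6$ by $|\cI|^6/p^\delta$ is achieved by combining the $L^2$- and $L^4$-estimates on $\cU^\perp$ that follow from Theorem~\ref{thm:Main0} (via the same fix-and-apply scheme, applied to two and to four of the variables respectively) with the Burgess bound for character sums over intervals of length exceeding $p^{1/4}$; the Burgess exponent gives a genuine power-of-$p$ saving in our range and closes the $p/(|\cI||\cG|)$ gap, yielding $\delta=\delta(\varepsilon)>0$.
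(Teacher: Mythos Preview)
Your second-moment setup is sound and the character expansion $N=\frac{|\cU|}{p-1}\sum_{\chi\in\cU^\perp}|S_\chi|^6$ is correct (at least in the subgroup case), but the final claim does not go through. You need
\[
\sum_{\chi\in\cU^\perp\setminus\{\chi_0\}}|S_\chi|^6\le |\cI|^6 p^{-\delta},
\]
and the tools you invoke give at best
\[
\sum_{\chi\in\cU^\perp}|S_\chi|^{2k}\lesssim \frac{p}{|\cU|}\,|\cI|^{2k-1}
\]
for $k=1,2,3$ (each obtained by freezing $2k-2$ of the $\cI$-variables and applying Theorem~\ref{thm:Main0}(i)), together with the Burgess bound $\max_{\chi\ne\chi_0}|S_\chi|\le |\cI|^{1-\eta}$. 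Any combination of the form $\max^{\,6-2k}\cdot L^{2k}$ yields $\lesssim p|\cI|^{5-(6-2k)\eta}/|\cU|$, and comparing with $|\cI|^6 p^{-\delta}$ requires $p/(|\cU|\,|\cI|^{1+(6-2k)\eta})\le p^{-\delta}$. With $|\cU|\,|\cI|\asymp p^{1/2+\varepsilon}$ the left side is $\asymp p^{1/2-\varepsilon}/|\cI|^{(6-2k)\eta}$, and since the Burgess exponent $\eta=\eta(\varepsilon)$ is only of size $O(\varepsilon)$, this is still a positive power of $p$. In short, Burgess saves $p^{O(\varepsilon)}$, while the gap you have to close is $p^{1/2-\varepsilon}$; nothing here bridges that.

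The paper avoids this deficit by \emph{not} taking the naive representation function. It replaces $x_1x_2x_3r$ by $z\cdot x\cdot q_1\cdot q_2\cdot r$, where $z\le L\approx p^{\varepsilon}$ is a short auxiliary variable and $q_1,q_2$ are primes in disjoint dyadic windows near $p^{1/4}$. The variable $z$ is used to split the characters: by a high-moment argument over $\{1,\dots,L\}$ there are only $\lesssim p^{0.2\delta}$ characters with $|\sum_{z\le L}\chi(z)|$ large, and on this small set Burgess applied to $\sum_{x\le H}\chi(x)$ already wins; on the remaining characters the $z$-sum itself carries the saving $L^{-0.1\delta}$. The primes $q_1,q_2$ are there so that the second moment of $xq_1q_2r$ can be bounded by the \emph{diagonal} size $Q^2H|\cU|$ via coprimality arguments (the paper's Lemma~\ref{lem:Main3}), which is all one needs once the $z$-splitting has provided the power saving. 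Your argument lacks both devices, and without the character splitting there is no way to leverage Burgess against more than $p^{o(1)}$ characters.
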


Let us mention several results relevant to
Theorems~\ref{thm:Main1},~\ref{thm:Main2}. In~\cite{GK1} it was
shown that if $|\cI|>p^{\frac{2}{3}-\frac{1}{192}+\varepsilon}$
and $\cA$ is an arbitrary subset of $\F_p$ with
$|\cA|>p^{\frac{2}{3}-\frac{1}{192}+\varepsilon}$ then
$$
|\cI\cdot \cA|=p+O(p^{1-\delta});\quad \delta=\delta(\varepsilon)>0.
$$
Later, the exponent $\frac{2}{3}-\frac{1}{192}$ was improved by
Bourgain to $\frac{5}{8}$ (unpublished). We also mention that if
$|\cI|>p^{1/2+\varepsilon}$, then one has
$$
|\cI\cdot \cI|=p+O(p^{1-\delta});\quad \delta=\delta(\varepsilon)>0,
$$
see, for example,~\cite{GK2}.

Theorem~\ref{thm:Main1} and its proof imply that if
$|\cI|>p^{5/8+\varepsilon},\, |\cG|>p^{3/8}$, then
$$
\F_p^*\subset \cI \cdot \cI \cdot \cG.
$$
We remark that from the arguments of Heath-Brown~\cite{HB} it
follows that if $|\cI|>p^{5/8+\varepsilon}$, then one has
$$
\F_p^*\subset \cI \cdot \cI \cdot \cI.
$$
Theorem~\ref{thm:Main2} can be compared with the result
from~\cite{Gar2}, where it was shown that under the same
condition $|\cI|>p^{1/4+\varepsilon}$ one has
$$
|\cI \cdot \cI\cdot \cI\cdot \cI|=p+O(p^{1-\delta});\quad \delta=\delta(\varepsilon)>0.
$$
However, the presence of $\cG$ in our theorems is an additional
obstacle which we are able to overcome using
Theorem~\ref{thm:Main0}.

Theorem~\ref{thm:Main2} implies, in particular, that any
$\lambda\not\equiv 0\pmod p$ can be represented in the form
$$
\lambda\equiv xyzuvwg^t\pmod p
$$
for some positive integers $x,y,z,t,u,v,w<p^{1/4+\varepsilon}.$

In passing, we remark that in Theorem~\ref{thm:Main0} the
condition $|\cU\cdot\cU|<10|\cU|$  can be relaxed up to
$|\cU\cdot\cU|<|\cU|p^{o(1)}$. However, the formulation in the
form $|\cU\cdot\cU|<10|\cU|$ already applies for the set $\cG$
and is sufficient to prove
Theorems~\ref{thm:x^n=lambda},~\ref{thm:Main1},~\ref{thm:Main2}.

In what follows $\chi$ denotes a character modulo the prime
number $p$ and $\chi_0$ denotes the principal character.

\section{Auxiliary Lemmas}

We start with the following lemma of Bourgain, Konyagin and
Shparlinski~\cite{BKSh}, see also~\cite{JC} for a different
proof with refined constants.

\begin{lemma}
\label{lem:BKSh} Let $\cA$ be a non-empty subset of $\cF_Q$,
where
$$
\cF_Q=\Bigl\{\frac{r}{s};\, r,s\in \mathbb{N},\, \gcd(r,s)=1,\, r,s\le Q \Bigr\}
$$
is the set of Farey fractions of order $Q$. Then for a given
positive integer $m$, the $m$-fold product set $\cA^{(m)}$ of
$\cA$ satisfies
$$
|\cA^{(m)}|>\exp\Bigl(-C(m)\frac{\log Q}{\sqrt{\log\log Q}}\Bigr)|\cA|^m,
$$
where $C(m)>0$ depends only on $m$, provided that $Q$ is large
enough.
\end{lemma}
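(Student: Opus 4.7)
The plan is to translate the multiplicative problem into an additive one in a high-dimensional integer lattice and then invoke a quantitative Freiman-type theorem. Assuming all fractions are positive (which costs only a factor of two), each $a \in \cA \subset \cF_Q$ has a unique prime factorization $a = \prod_{p \le Q} p^{\nu_p(a)}$, and the map $\phi(a) = (\nu_p(a))_{p \le Q}$ embeds $\cA$ injectively into $\Z^{\pi(Q)}$, sending multiplication to addition. Hence $|\cA^{(m)}| = |m\phi(\cA)|$. The Farey bounds $r, s \le Q$ become the two inequalities $\sum_{p \le Q} \nu_p(a)^{+}\log p \le \log Q$ and $\sum_{p \le Q} \nu_p(a)^{-}\log p \le \log Q$, which confine $\phi(\cA)$ to a lattice region of controlled shape.

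I would argue by contradiction: suppose $|\cA^{(m)}| \le e^{-s}|\cA|^{m}$ for some parameter $s$. Cauchy--Schwarz forces the multiplicative $m$-energy to satisfy $E_m(\cA) \ge e^s |\cA|^m$; applying the Balog--Szemer\'edi--Gowers theorem (directly when $m=2$, after a H\"older reduction for $m>2$) extracts a dense subset $\cA' \subseteq \cA$ of size $|\cA'| \ge e^{-O(s)}|\cA|$ and with small doubling $|\cA'\cdot \cA'| \le e^{O(s)}|\cA'|$. The Pl\"unnecke--Ruzsa inequality combined with Chang's quantitative Freiman theorem in the torsion-free ambient group $\Z^{\pi(Q)}$ then produces a generalized arithmetic progression $P$ of rank $d$ and size $|P| \le e^{O(s)}|\cA'|$ containing $\phi(\cA')$, where $d$ is polynomially bounded in $e^{s}$.

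The decisive step is to push $P$ back into $\cF_Q$: the elements of $P$ whose $\phi$-preimages lie in $\cF_Q$ correspond to lattice points in a $d$-dimensional region cut out by the Farey inequalities above, and a Stirling/simplex-volume estimate of the form $\binom{N+d}{d}$ with $N \asymp \log Q$ yields $|\cA'| \le \exp(C\sqrt{d\log Q})$ after optimizing the prime weights. Combining this with $|\cA'| \ge e^{-O(s)}|\cA|$ and balancing $s$ against $d \lesssim e^{O(s)}$ gives the stated bound, the factor $\log Q/\sqrt{\log\log Q}$ arising precisely as the optimum of the trade-off between the Pl\"unnecke/Chang loss $e^{s}$ and the lattice count $\exp(C\sqrt{d\log Q})$.

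The main obstacle is propagating the constants cleanly through the chain BSG $\to$ Pl\"unnecke $\to$ Chang $\to$ lattice count and then optimizing $s$ to extract the sharp $\sqrt{\log\log Q}$ in the exponent. Chang's rank bound and the simplex count cost exponentials in $d$ and $\sqrt{d\log Q}$ respectively, and the balance is delicate; that is where the square root enters. The alternative proof in the reference cited for refined constants presumably short-circuits this chain by a more combinatorial covering argument tailored to $\cF_Q$, but the same quantitative balance produces the same exponent.
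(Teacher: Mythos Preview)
The paper does not supply its own proof of this lemma; it is quoted from Bourgain--Konyagin--Shparlinski~\cite{BKSh}, with an alternative proof cited to~\cite{JC}. Those arguments are direct: one bounds, for every fixed rational $b$, the number of representations $b=a_1\cdots a_m$ with each $a_i\in\cF_Q$ by $\exp\bigl(C(m)\log Q/\sqrt{\log\log Q}\bigr)$, and then uses $|\cA|^m\le |\cA^{(m)}|\cdot\max_b r(b)$. The $\sqrt{\log\log Q}$ comes from optimizing a smoothness threshold in that representation count; no Balog--Szemer\'edi--Gowers or Freiman-type structure theorem is invoked.

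Your route has a genuine gap already at the BSG step. From $|\cA^{(m)}|\le e^{-s}|\cA|^m$ Cauchy--Schwarz indeed gives $E_m(\cA)\ge e^s|\cA|^m$, but BSG requires the \emph{normalized} energy $E_m(\cA)/|\cA|^{2m-1}\ge\alpha$ with $\alpha$ not too small, and its output (size of the subset, its doubling constant) degrades like $\alpha^{-O(1)}$. Here $\alpha=e^s/|\cA|^{m-1}$. The lemma has to cover $|\cA|$ as large as $|\cF_Q|\asymp Q^2$, while the target threshold is only $s\asymp\log Q/\sqrt{\log\log Q}$; thus $\alpha=Q^{-2(m-1)+o(1)}$. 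BSG then produces a subset that may be a single point, with doubling $K\le\alpha^{-O(1)}=Q^{O(1)}$, Chang's theorem gives a GAP of rank $d$ at least polynomial in $K$, hence $d=Q^{O(1)}$, and your lattice bound $\exp(C\sqrt{d\log Q})=\exp(Q^{O(1)})$ is vacuous.

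Even if one grants a usable $\cA'$ with $d\lesssim e^{O(s)}$ as you write, the optimization you describe does not yield the claimed exponent. The chain gives
\[
|\cA|\ \le\ \exp\Bigl(O(s)+C\,e^{O(s)/2}\sqrt{\log Q}\Bigr),
\]
whereas the only lower bound at hand is $|\cA|\ge e^{s/(m-1)}$ (from $|\cA^{(m)}|\ge|\cA|$). The right-hand side grows doubly exponentially in $s$, so these inequalities never collide, and no value of $s$ is excluded. The factor $\log Q/\sqrt{\log\log Q}$ does not arise from a BSG/Chang trade-off; it comes from the arithmetic of $\cF_Q$ via the direct representation-count argument in the cited references.
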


We recall that  the $m$-fold product set $\cA^{(m)}$ of $\cA$ is
defined as
$$
\cA^{(m)}=\{a_1\cdots a_m;\quad a_1,\ldots, a_m\in \cA\}.
$$

Our next lemma stems from the work of Bourgain et.
al.~\cite{BGKS2}. Recall that a lattice in $\R^n$ is an additive
subgroup of $\R^n$ generated by $n$ linearly independent
vectors. Take an arbitrary convex compact and symmetric with
respect to $0$ body $D\subset\R^n$. Recall that, for a lattice
$\Gamma\subset\R^n$ and $i=1,\ldots,n$, the $i$-th successive
minimum $\lambda_i(D,\Gamma)$ of the set $D$ with respect to the
lattice $\Gamma$ is defined as the minimal number $\lambda$ such
that the set $\lambda D$ contains $i$ linearly independent
vectors of the lattice $\Gamma$. Obviously,
$\lambda_1(D,\Gamma)\le\ldots\le\lambda_n(D,\Gamma)$. From
~\cite[Proposition~2.1]{BHW} it is known that
\begin{equation}
\label{eqn:minimaslattice}
\prod_{i=1}^n \min\{\lambda_i(D,\Gamma),1\} \le \frac{(2n+1)!!}{
|D\cap\Gamma|}.
\end{equation}

\begin{lemma}
\label{lem:x=sy geometry} For any $s_0\in \F_p$, the number of
solutions of the congruence
\begin{equation}
\label{eqn:x=sy}
x\equiv s_0y\pmod p;\quad x,y\in \mathbb{N},\quad x\le X,\quad y\le Y,\quad \gcd(x,y)=1,
\end{equation}
is bounded by $O(1+XYp^{-1}).$
\end{lemma}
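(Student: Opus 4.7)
The plan is to recast the problem in the geometry of numbers. I would introduce the rank-two lattice $\Gamma=\{(x,y)\in\Z^2\colon x\equiv s_0 y\pmod{p}\}$ of determinant $p$ and the centrally symmetric convex body $D=[-X,X]\times[-Y,Y]$ of volume $4XY$; then each solution of~\eqref{eqn:x=sy} corresponds to a $\Z^2$-primitive vector of $\Gamma$ lying in the positive quadrant of $D$, so it suffices to bound the number of primitives in $\Gamma\cap D$. Let $\lambda_1\le\lambda_2$ denote the successive minima of $D$ with respect to $\Gamma$, attained by a reduced basis $v_1,v_2\in\Gamma$ (available in dimension two via Gauss reduction). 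Minkowski's second theorem yields $\lambda_1\lambda_2\asymp p/(XY)$, and specialising~\eqref{eqn:minimaslattice} to $n=2$ gives
$$
|D\cap\Gamma|\le\frac{15}{\min(\lambda_1,1)\min(\lambda_2,1)}.
$$

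I would then split into three cases according to the sizes of the minima. If $\lambda_1\ge 1$ then $D\cap\Gamma=\{0\}$ and there is nothing to prove. If $\lambda_2<1$ the displayed bound gives $|D\cap\Gamma|\ll 1/(\lambda_1\lambda_2)\ll XY/p$, which already dominates the primitive count. The delicate case is $\lambda_1<1\le\lambda_2$; here $|D\cap\Gamma|$ can be as large as $1/\lambda_1$, well beyond the target $O(1+XY/p)$. In this regime I would use the reducedness of the basis to show that $\|nv_2+mv_1\|_D\ge 1$ for every integer $n\neq 0$ and every $m\in\Z$: for $|n|=1$ this is the Gauss-reduction inequality $\|v_2+kv_1\|_D\ge\lambda_2\ge 1$, while for $|n|\ge 2$ it follows from writing $m=qn+r$ with $|r|\le|n|/2$ and applying the triangle inequality together with $\lambda_1<1\le\lambda_2$. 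Hence every nonzero lattice vector of $\Gamma$ in $D$ is an integer multiple of the short vector $v_1$, and among such multiples $kv_1$ the only $\Z^2$-primitive ones are $\pm v_1$ (precisely when $v_1$ itself is $\Z^2$-primitive). This accounts for $O(1)$ primitive solutions.

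The hard part is exactly the case $\lambda_1<1\le\lambda_2$, where without the coprimality hypothesis the total lattice-point count can dramatically exceed the claimed bound; using $\gcd(x,y)=1$ to discard the non-unit integer multiples of the short vector $v_1$ is what closes the argument.
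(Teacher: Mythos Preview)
Your proposal is correct and follows the same geometry-of-numbers approach as the paper: the same lattice $\Gamma$, body $D$, appeal to~\eqref{eqn:minimaslattice}, and case split on the successive minima. The paper's execution is slightly leaner in two places: for $\lambda_2>1$ it simply observes that by definition $D\cap\Gamma$ spans at most one line, so the coprimality condition forces $J\le 1$ without any reduced-basis computation; and for $\lambda_2\le 1$ it obtains the lower bound $\lambda_1\lambda_2\ge p/(2XY)$ directly by noting that any two independent vectors of $\Gamma$ have nonzero determinant divisible by $p$, rather than invoking Minkowski's second theorem as a black box.
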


\begin{proof} Consider the lattice
$$
\Gamma=\{(u,v)\in \Z^2; \quad u\equiv s_0v\pmod p\}
$$
and the body
$$
D=\{(u,v)\in \R^2;\quad |u|\le X,\,\, |v|\le Y\}.
$$
Let $\lambda_1,\lambda_2$ be the consecutive minimas of the body
$D$ with respect to the lattice $\Gamma$. If $\lambda_2>1$ then
there is at most one independent vector in $\Gamma\cap D$,
implying that $J\le 1$, where $J$ is the number of solutions
of~\eqref{eqn:x=sy}.

Let now $\lambda_2\le 1.$ Then, by~\eqref{eqn:minimaslattice},
we get
$$
\lambda_1\lambda_2\le \frac{15}{|\Gamma\cap D|}\le \frac{15}{J}.
$$
Let $(u_i,v_i)\in \lambda_i D\cap\Gamma,\, i=1,2,$ be linearly
independent. Then
$$
0\not=\det \(\begin{array}{cc}
    u_1&v_1\\
    u_2&v_2\\
  \end{array}\)\equiv 0\pmod p.
$$
Therefore,
$$
p\le \Bigl|\det \(\begin{array}{cc}
    u_1&v_1\\
    u_2&v_2\\
  \end{array}\)\Bigr|=|u_1v_2-u_2v_1|\le 2\lambda_1\lambda_2 XY\le \frac{30XY}{J}
$$
and the result follows.

\end{proof}

We also need the following simple lemma.

\begin{lemma}
\label{lem:simple u/v=a} Let $X,Y$ be positive numbers with
$XY<p$. Then for any $\lambda$ there is at most one solution to
the congruence
$$
\frac{x}{y}\equiv\lambda\pmod p;\quad x,y\in \mathbb{N},\quad x\le X,\ y\le Y,\quad \gcd(x,y)=1.
$$
\end{lemma}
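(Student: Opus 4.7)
The plan is to use a standard Farey-fraction uniqueness argument, which reduces to bounding a certain integer by $p$. Suppose for contradiction that two distinct pairs $(x_1,y_1)$ and $(x_2,y_2)$ satisfy the congruence. Cross-multiplying $x_1/y_1\equiv x_2/y_2\pmod p$ gives
$$
x_1 y_2\equiv x_2 y_1\pmod p,
$$
so $p$ divides the integer $x_1y_2-x_2y_1$.

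Next I would estimate this integer in absolute value. Since $x_i\le X$ and $y_i\le Y$, both products $x_1y_2$ and $x_2y_1$ lie in $[1,XY]$, and therefore
$$
|x_1y_2-x_2y_1|\le XY<p.
$$
Combined with the divisibility by $p$, this forces $x_1y_2=x_2y_1$ as an equality of integers.

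Finally, I would invoke the coprimality hypotheses. Since $\gcd(x_1,y_1)=\gcd(x_2,y_2)=1$, the identity $x_1y_2=x_2y_1$ between reduced fractions yields $x_1=x_2$ and $y_1=y_2$, contradicting the assumed distinctness. Hence at most one solution exists. No step here is a real obstacle; the only thing to be careful about is the sharp bound $|x_1y_2-x_2y_1|\le XY$ (not $2XY$), since both terms are positive and lie in the same interval, which is precisely what allows the condition $XY<p$ (rather than $XY<p/2$) to suffice.
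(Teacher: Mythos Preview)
Your argument is correct and is essentially identical to the paper's proof: the paper also passes from the congruence $x y_0\equiv x_0 y\pmod p$ to the equality $x y_0=x_0 y$ using that both sides lie in $[1,XY]$ with $XY<p$, and then concludes from coprimality. Your added remark about why the bound is $XY$ rather than $2XY$ is a nice clarification but not a departure from the paper's method.
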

\begin{proof}
Assuming that there is at least one solution $(x_0,y_0)$, we get
that
$$
xy_0\equiv x_0y\pmod p,
$$
and since the both hand sides are not greater than $XY< p$, the
congruence is converted to an equality, which together with
$\gcd(x,y)=\gcd(x_0,y_0)=1$ implies that $x=x_0,\, y=y_0$.
\end{proof}

To prove our Theorem~\ref{thm:x^n=lambda}, we shall need the
following lemma, which can be found in~\cite[Chapter~1,
Theorem~1]{Mont}.
\begin{lem}
\label{lem:Mont} Let $\gamma_1, \ldots, \gamma_d$ be a sequence
of $d$ points of the unit interval $[0,1]$. Then for any integer
$K\ge 1$, and an interval $[\alpha, \beta] \subseteq [0,1]$, we
have
\begin{equation*}
\begin{split}
\# \{n =1, \ldots, d~:&~\gamma_n  \in [\alpha, \beta]\} - d(\beta - \alpha)\\
\ll \frac{d}{K} + &\sum_{k=1}^K \(\frac{1}{K} +\min\{\beta - \alpha, 1/k\}\)
\left|\sum_{n=1}^d \exp(2 \pi i k \gamma_n)\right|.
\end{split}
\end{equation*}
\end{lem}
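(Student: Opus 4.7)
The plan is to establish the bound via the standard Erd\H{o}s--Tur\'an / Koksma approach: approximate the $1$-periodic indicator function of $[\alpha,\beta]$ from above and below by trigonometric polynomials of degree at most $K$, and then evaluate each approximation by means of the exponential sums $\sum_{n=1}^d \exp(2\pi i k\gamma_n)$ that already appear in the statement.

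First I would invoke the Selberg--Beurling construction to produce trigonometric polynomials $M^+$ and $M^-$ of degree $\le K$ satisfying three properties: (a) $M^-(x) \le \chi_{[\alpha,\beta]}(\{x\}) \le M^+(x)$ for every real $x$; (b) the zeroth Fourier coefficient is $\widehat{M^\pm}(0) = (\beta-\alpha) + O(1/K)$; and (c) for $1 \le |k| \le K$ the remaining Fourier coefficients obey $|\widehat{M^\pm}(k)| \ll \min\{\beta-\alpha,\, 1/|k|\} + 1/K$. The mixed bound in (c) is what makes the lemma sharp, since a crude $1/|k|$ estimate alone would not reproduce the $\min\{\beta-\alpha,1/k\}$ factor inside the sum.

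Next I would expand each polynomial by Fourier inversion and sum over $n$, obtaining
$$
\sum_{n=1}^d M^\pm(\gamma_n) \;=\; d\,\widehat{M^\pm}(0) \;+\; \sum_{1 \le |k| \le K} \widehat{M^\pm}(k) \sum_{n=1}^d \exp(2\pi i k \gamma_n).
$$
Property (b) then supplies a main term $d(\beta-\alpha) + O(d/K)$, while property (c) combined with the triangle inequality converts the off-diagonal contribution into exactly the sum $\sum_{k=1}^{K}\bigl(1/K + \min\{\beta-\alpha, 1/k\}\bigr)\bigl|\sum_{n=1}^d \exp(2\pi i k \gamma_n)\bigr|$ that appears in the statement.

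Finally, sandwiching $\sum_{n=1}^d M^-(\gamma_n) \le \#\{n : \gamma_n \in [\alpha,\beta]\} \le \sum_{n=1}^d M^+(\gamma_n)$ and subtracting the quantity $d(\beta-\alpha)$ yields the claimed inequality in both directions. The main obstacle is step one: producing the majorant/minorant with Fourier coefficients decaying like $\min\{\beta-\alpha,\, 1/|k|\} + 1/K$ rather than only $1/|k|$. This is achieved classically via Beurling's entire majorant of the sign function together with Selberg's periodization trick; once these extremal polynomials are available, the remainder of the argument reduces to routine Fourier bookkeeping and the triangle inequality.
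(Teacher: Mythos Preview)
The paper does not prove this lemma at all; it simply quotes it from Montgomery's book~\cite[Chapter~1, Theorem~1]{Mont}. Your outline via Selberg--Beurling majorants and minorants is correct and is precisely the argument given in that reference, so there is nothing to add.
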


We shall also need the well-known character sum bounds of
Burgess~\cite{Bur1, Bur2}.
\begin{lem}
\label{lem:Burgess} For any fixed positive integer constant $r$
the following bound holds:
$$
\max_{\chi\not=\chi_0}\Bigl|\sum_{n=L+1}^{L+N}\chi(n)\Bigr|<N^{1-\frac{1}{r}}p^{\frac{r+1}{4r^2}+o(1)}.
$$
\end{lem}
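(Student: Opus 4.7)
The plan is to follow the classical Burgess shifting argument. Set $S = \sum_{n=L+1}^{L+N}\chi(n)$ and let $A, B \ge 1$ be parameters to be optimized at the end. The key identity is $\chi(n+ab) = \chi(a)\chi(na^{-1}+b)$ for $\gcd(a,p)=1$. Shifting the interval of summation by $ab$ with $1 \le a \le A$, $1 \le b \le B$ costs only $O(ab)$ in boundary terms, and averaging over $a$ and $b$ yields
$$|S| \ll AB + \frac{1}{AB}\sum_{n=L+1}^{L+N}\Bigl|\sum_{\substack{1 \le a \le A \\ 1 \le b \le B}} \chi(na^{-1}+b)\Bigr|.$$

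I would then apply H\"older's inequality with exponent $2r$ to the outer $n$-sum and extend the $n$-range to all residues modulo $p$ (permissible by positivity after raising to the $2r$-th power); expanding and swapping orders of summation reduces matters to bounding the $2r$-th moment
$$V := \sum_{x \bmod p}\Bigl|\sum_{b=1}^{B}\chi(x+b)\Bigr|^{2r}.$$
Expanding $|\cdot|^{2r}$ turns $V$ into a sum of $B^{2r}$ complete character sums over polynomials of degree at most $2r$, to which A. Weil's bound applies. The nontrivial arithmetic input is the identification of the ``diagonal'' tuples $(b_1,\ldots,b_{2r})$ for which $\prod_{j \le r}(x+b_j)\prod_{j > r}(x+b_j)^{-1}$ is a perfect $d$-th power modulo $p$, where $d = \mathrm{ord}\,\chi$, so that the Weil bound degenerates; these tuples contribute $\ll pB^r$, while the nondegenerate ones contribute $\ll B^{2r}\sqrt{p}$, giving $V \ll pB^r + B^{2r}\sqrt{p}$.

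Substituting this back and optimizing in $A, B$---roughly $B \sim p^{1/(2r)}$ with $A$ chosen to balance the two resulting error terms---produces the claimed exponent $N^{1-1/r}\,p^{(r+1)/(4r^2) + o(1)}$; any residual logarithmic loss of the form $(\log p)^{1/r}$ is absorbed into the $p^{o(1)}$ factor in the statement. The main obstacle is the Weil/diagonal step: one must verify that for generic tuples the relevant polynomial is not a $d$-th power (so that Weil yields the square-root saving $O(r\sqrt{p})$ per sum), and the combinatorial count of the exceptional diagonal tuples is precisely what drives the exponent $(r+1)/(4r^2)$. Every other ingredient---the shifting identity, H\"older, and the final optimization of $A$ and $B$---is essentially routine.
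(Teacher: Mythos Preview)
The paper does not prove this lemma; it is quoted as a known result with a citation to Burgess's original papers. Your outline is a sketch of precisely that classical argument, so you are aligned with what the paper invokes.

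One caveat on the sketch itself: as written, your display places both $a$ and $b$ inside the absolute value and then applies H\"older with exponent $2r$ to the lone $n$-sum. Taken literally this produces a factor $N^{1-1/(2r)}$ rather than $N^{1-1/r}$, and after raising to the $2r$-th power the cross-terms in $a_1,\dots,a_{2r}$ do not collapse to your $V$. The standard execution pulls the $a$-sum outside the absolute value first (absorbing the twist $\chi(a)$), reparametrizes via $m\equiv na^{-1}\pmod p$, and introduces the multiplicity $\nu(m)=\#\{(n,a):na^{-1}\equiv m\}$; H\"older is then applied to $\sum_m\nu(m)\bigl|\sum_b\chi(m+b)\bigr|$, and it is the combination of $\sum_m\nu(m)=AN$ with the second-moment bound $\sum_m\nu(m)^2\ll AN(1+AN/p)$ that delivers the exponent $N^{1-1/r}$. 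Your Weil/diagonal analysis of $V$ and the choice $B\sim p^{1/(2r)}$ are correct.
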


We also recall the following bound of exponential sum estimates
over subgroups due to Konyagin~\cite{Kon1}.

\begin{lem}
\label{lem:Kon1} If $\cG$ is a subgroup of $\F_p^*$ with
$|\cG|<p^{1/2}$ then
$$
\max_{a\not\equiv 0(\bmod p)}\Bigl|\sum_{x\in G}e_p{(ax)}|\ll |G|^{29/36}p^{1/18}.
$$
\end{lem}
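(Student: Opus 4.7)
The plan is to exploit the multiplicative structure of $\cG$ in combination with a sum-product type estimate. Write $S_a=\sum_{x\in\cG} e_p(ax)$. The fundamental observation is that $|S_a|$ is invariant along multiplicative cosets: for any $t\in\cG$, the substitution $x\mapsto tx$ is a bijection on $\cG$, so $|S_a|=|S_{at}|$. Consequently, if $M=\max_{a\not\equiv 0}|S_a|$ is attained at some $a_0$, then $|S_{a_0 t}|=M$ for every $t\in\cG$, whence
$$
|\cG|\cdot M^{2k}\le \sum_{a=0}^{p-1}|S_a|^{2k}=p\cdot T_k(\cG),
$$
where $T_k(\cG)$ denotes the number of solutions of $x_1+\cdots+x_k\equiv y_1+\cdots+y_k \pmod p$ with all $x_i,y_i\in\cG$. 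This reduces the problem to obtaining a nontrivial upper bound on $T_k(\cG)$ for an appropriately chosen $k$.

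The next step is to exploit the sum-product phenomenon. Because $\cG$ has no multiplicative doubling at all ($\cG\cdot\cG=\cG$), it cannot be too additively structured. For subgroups with $|\cG|<p^{1/2}$ one can show $|\cG+\cG|\gg |\cG|^{1+\eta}$ for some absolute $\eta>0$, and, more generally, bounds of the form $T_k(\cG)\ll |\cG|^{2k-1-c_k}$ with $c_k>0$. Plugging such a bound into the moment inequality above yields $M\ll \bigl(pT_k(\cG)/|\cG|\bigr)^{1/(2k)}$, and optimizing in $k$ and in the sum-product parameters is what produces a specific pair of exponents of the shape $|\cG|^{\alpha}p^{\beta}$.

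The main obstacle is precisely obtaining a sharp enough bound on $T_k(\cG)$ to deliver the exponents $29/36$ and $1/18$. Konyagin's original argument bypasses a direct additive-energy estimate: instead, one fixes a representative $u$ of $\cG+\cG$ (or of a related sumset), uses the diagonal action of $\cG$ on the set of representations $u=x+y$ with $x,y\in\cG$, and controls the resulting incidence count by a geometry-of-numbers input close in spirit to Lemma~\ref{lem:x=sy geometry} — reducing many representations to short lattice vectors whose determinant must vanish modulo $p$. Combining this incidence bound with Plünnecke-Ruzsa type inequalities, and calibrating the exponents carefully, yields the stated estimate. The delicate balancing of these parameters is what makes $29/36$ and $1/18$ appear, and is the step where any attempted simplification tends to lose.
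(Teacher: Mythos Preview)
The paper does not prove this lemma; it is quoted as a known result of Konyagin~\cite{Kon1} and used as a black box. So there is no ``paper's own proof'' to compare against --- only Konyagin's original argument.

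Your framework is correct as far as it goes: the coset invariance $|S_a|=|S_{at}|$ for $t\in\cG$ together with Parseval gives
$$
|\cG|\,M^{2k}\le \sum_{a=0}^{p-1}|S_a|^{2k}=p\,T_k(\cG),
$$
and everything then hinges on a sharp bound for the $2k$-th additive moment $T_k(\cG)$. That reduction is standard and is exactly how Konyagin proceeds.

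Where your sketch drifts is in the description of how $T_k(\cG)$ is actually controlled. Konyagin's argument does not rest on a geometry-of-numbers lattice count in the style of Lemma~\ref{lem:x=sy geometry}; the decisive input is the \emph{Stepanov polynomial method}. One constructs an auxiliary polynomial vanishing to high order on the intersection of a coset of $\cG$ with a short interval (or, equivalently, on certain translates), and the degree--multiplicity tension bounds the number of such points. This yields a nontrivial bound on the number of solutions of $x+y\equiv u\pmod p$ with $x,y\in\cG$ uniformly in $u$ (and iterated versions thereof), which feeds into $T_k$. The specific exponents $29/36$ and $1/18$ come out of optimizing the Stepanov bound against the moment inequality, not from Pl\"unnecke--Ruzsa combined with lattice determinants. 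Your paragraph invoking ``short lattice vectors whose determinant must vanish modulo $p$'' and ``Pl\"unnecke--Ruzsa type inequalities'' describes a different toolkit --- closer to the methods of the present paper --- and it is not clear it would reach these particular exponents; at any rate you have not shown that it does. As written, the proposal is a plausible outline of \emph{a} strategy but not a proof: the step ``calibrating the exponents carefully \ldots\ yields the stated estimate'' is precisely the content of Konyagin's paper, and it relies on Stepanov's method rather than the ingredients you name.
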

Here and below we use the abbreviation $e_p(z)=e^{2\pi i z/p}$.
Lemma~\ref{lem:Kon1} will be used in the proof of
Theorem~\ref{thm:x^n=lambda}. We recall that a better bound
follows from the work of Shteinikov~\cite{Sht}, but since in
Theorem~\ref{thm:x^n=lambda} we do not specify the constant $c$,
for our current purposes Lemma~\ref{lem:Kon1} suffices.

\section{Proof of Theorem~\ref{thm:Main0}}

Given a positive integer $d$ we let $J_d(H,\cU)$ be the number
of solutions of \eqref{eqn:x=yg^z} with the additional condition
$\gcd(x,y)=d$. Then we have
\begin{equation}
\label{eqn:casegcd=1}
J=\sum_{d\le H}J_d(H,\cU)=\sum_{d\le H}J_1(H/d,\cU).
\end{equation}
Since each pair of relatively prime positive integers $(x,y)$
can be defined by the rational number $x/y$, it follows that
$J_1(H/d,\cU)$ is equal to the cardinality of the set
$$
\mathcal J_d=\Bigl\{\frac{x}{y};\quad x,y\in \mathbb{N},\, x,y\le \frac{H}{d},\, \gcd(x,y)=1,\; \frac{x}{y}\!\!\!\pmod
p\in \cU\Bigr\}.
$$
We observe that the $m$-fold product set $\cJ_d^{(m)}$ satisfies
$$
\mathcal J_d^{(m)}\subset \Bigl\{\frac{u}{v};\quad u,v\in \mathbb{N},\,
u,v\le (H/d)^m,\, \gcd(u,v)=1,\, \frac{u}{v}\!\!\!\pmod p\in \cU^{(m)}\Bigr\}.
$$
The  Pl\"unecke inequality (see,~\cite[Theorem 7.7]{Nat})
together with the condition $|\cU\cdot\cU|< 10 |\cU|$ implies
that $|\cU^{(m)}|< 10^m|\cU|$. Thus, using Lemma~\ref{lem:x=sy
geometry}, we derive that
$$
|\mathcal J_d^{(m)}|\ll \sum_{r\in \cU^{(m)}}\left (1+\frac{(H/d)^{2m}}{p}\right )\ll |\cU|\left (1+\frac{(H/d)^{2m}}{p}\right ),
$$
the implied constant may depend on $m.$ On the other hand
Lemma~\ref{lem:BKSh} implies that $|\mathcal J_d^{(m)}|\gtrsim
|\cJ_d|^{m}$. Thus,
\begin{equation}
\label{eqn:th1casegenJ1(H/d)}
J_1(H/d,\cU)=|\cJ_d|\lesssim |\cU|^{1/m}+\frac{|\cU|^{1/m}H^{2}}{p^{1/m}d^2}.
\end{equation}
We first prove the part $(i)$ of our theorem. It suffices to
prove that for any $\delta>0$ there exists $c=c(\delta)>0$ such
that $J_1(H/d,\cU)<(H/d)p^{\delta}$. In particular, since
$J_1(H/d,\cU)\le (H/d)^2$, we can assume that $H/d>p^{\delta}$.
Let $m$ be the smallest positive integer such that $
|\cU|<(H/d)^m.$ Clearly, $m<1+1/\delta$. It is easy to see that
$(H/d)^{m}<p/|\cU|$. Indeed, if $(H/d)^{m}\ge p/|\cU|$, then
from the condition of the theorem it follows that $m\ge n+1$. On
the other hand, by the definition of  $m$ we have
$(H/d)^{m-1}\le |\cU|$. Hence, the lower and the upper bounds
for $H/d$ give
$$
|\cU|\ge p^{(m-1)/(2m-1)}\ge p^{n/(2n+1)},
$$
which contradicts the condition of the theorem.

Thus, we have
$$
|\cU|<(H/d)^m<p/|\cU|.
$$
Combining this with~\eqref{eqn:th1casegenJ1(H/d)}, we get that
$$
J_1(H/d,\cU)\lesssim \frac{H}{d},
$$
which, in view of the remark above, finishes the proof of the
part $(i)$ of the theorem.

Now we prove the part $(ii)$ of the theorem.  In the
inequality~\eqref{eqn:casegcd=1} we split the summation over
$d<H$ into at most $H^{o(1)}$ dyadic intervals of the form
$[H/2^{j}, H/2^{j-1}]$. It then follows
from~\eqref{eqn:casegcd=1} that for some $1\le L\le H$ one has
$$
J\lesssim \sum_{H/(2L)\le d\le H/L}J_1(H/d,\cU).
$$
Using~\eqref{eqn:th1casegenJ1(H/d)} we get that for any fixed
positive integer constant $m$ we have the bound
\begin{equation}
\label{eqn:J lessim with m}
J\lesssim H\Bigl(\frac{|\cU|^{1/m}}{L}+\frac{|\cU|^{1/m}L}{p^{1/m}}\Bigr).
\end{equation}
We can assume that $L>p^{\delta}$ for some small positive
constant $\delta>0$, as otherwise, the result follows
from~\eqref{eqn:J lessim with m} for a sufficiently large
constant $m$.

If $L\ge |\cU|$, then applying~\eqref{eqn:J lessim with m} with
$m=1$ and using $L\le H$ we obtain that
$$
J\lesssim H+\frac{|\cU|H^2}{p}.
$$
Thus in this case we get the desired estimate. So, in what
follows, we assume that $L\le |\cU|$. Consider two cases.

{\bf Case 1.} $|\cU|^{1/2}\le L\le |\cU|.$ Since we also have
$L\le H$, taking $m=1$ we get
$$
J\lesssim \frac{H|\cU|}{L}+\frac{|\cU|H^2}{p}.
$$
Now take $m=2$ in~\eqref{eqn:J lessim with m} and get
$$
J\lesssim  H+\frac{|\cU|^{1/2}HL}{p^{1/2}}.
$$
Putting the last two inequalities together, we obtain that
$$
J\lesssim  H+\frac{|\cU|H^2}{p}+\min\Bigl\{\frac{H|\cU|}{L}, \frac{|\cU|^{1/2}HL}{p^{1/2}}\Bigr\}.
$$
Since
$$
\min\Bigl\{\frac{H|\cU|}{L}, \frac{|\cU|^{1/2}HL}{p^{1/2}}\Bigr\}\le\frac{H|\cU|^{3/4}}{p^{1/4}},
$$
the result follows in this case.

{\bf Case 2.} $|\cU|^{1/(n+1)}\le L\le |\cU|^{1/n}$ for some
integer $n\ge 2$. Since $L>p^{\delta}$ for some positive
constant $\delta$, we get that $n\le n_0$ for some  integer
constant $n_0$. We apply the bound~\eqref{eqn:J lessim with m}
with $m=n+1$ and obtain
$$
J\lesssim H+\frac{H|\cU|^{(2n+1)/n(n+1)}}{p^{1/(n+1)}}.
$$
Since $n\ge 2$ and $|\cU|<p^{2/5}$ we get
$$
|\cU|^{(2n+1)/n}\le p.
$$
Therefore, we obtain $J\lesssim H$ and finish the proof of our
theorem.

To derive Corollary~\ref{cor:Main1}, we fix $r=r_0\in \cU$ such
that
$$
J\le |\cU|J',
$$
where $J'$ is the number of solutions of the congruence
$$
x\equiv x_1r_0^{-1}r_1\pmod p;\quad 1\le x,x_1\le H,\quad r_1\in\cU.
$$
Now we simply denote
$$
\cU'=\{r_0^{-1}r_1;\quad r_1\in \cU\}
$$
and apply Theorem~\ref{thm:Main0} with $\cU$ substituted by
$\cU'$.

\section{Proof of Theorem~\ref{thm:x^n=lambda} and Corollary~\ref{cor:x f(x)}}

We can assume that $\lambda\not\equiv 0\pmod p$. Denote
$N=\lfloor p/d\rfloor$. If $d_1=(d,p-1)$, then the
congruence~\eqref{eqn:x^d=lambda} becomes equivalent to a
congruence of the form
$$
x^{d_1}\equiv \lambda_1\pmod p;\quad x\in\N,\quad L+1\le x\le L+N,
$$
for some $\lambda_1\not\equiv 0(\bmod p)$, and we have $d_1|
p-1$ and $N_1=\lfloor p/d_1\rfloor \ge N$. Thus,  without loss
of generality we can assume that $d|p-1$. We can also assume
that
\begin{equation}
\label{eqn:<d<}
p^{\frac{1}{3}-0.001}<d<p^{\frac{2}{3}+0.001},
\end{equation}
as otherwise the claim would follow from the trivial bound
$T_p(d,\lambda,L,N)\le \min\{d, N\}$.

Let $\cG_d$ be the subgroup of $\F_p^*$ of order $d$. We fix one
solution $x=x_0$ to~\eqref{eqn:x^d=lambda}. Clearly,
$T_p(d,\lambda,L,N)$ is equal to the number of solutions of the
congruence
$$
x(\bmod p)\in x_0\cG_d, \quad x\in\N,\quad    L+1\le x\le L+N.
$$

In view of~\eqref{eqn:<d<}, we have
$$
d\in [p^{\frac{1}{3}-0.001}, p^{\frac{1}{3}+0.001}]\cup [p^{\frac{2}{3}-0.001}, p^{\frac{2}{3}+0.001}]\cup [p^{\frac{1}{3}+0.001}, p^{\frac{2}{3}-0.001}].
$$
Accordingly, we consider three cases.

{\bf Case 1.} $p^{\frac{1}{3}-0.001}<d<p^{\frac{1}{3}+0.001}.$
In this case we express $T_p(d,\lambda,L,N)$ in terms of
exponential sums and obtain
$$
T_p(d,\lambda,L,N)=\frac{1}{p}\sum_{a=0}^{p}\sum_{u\in x_0\cG_d}\,\sum_{L+1\le x\le L+N}e_p(a(u-x)).
$$
We separate the term corresponding to $a=0$ and using the
standard arguments and Lemma~\ref{lem:Kon1}, we obtain
\begin{equation}
\label{eqn:Tpdlambda+Kon}
T_p(d,\lambda,L,N)\ll \frac{dN}{p}+d^{29/36}p^{1/18}\Bigl(\frac{1}{p}\sum_{a=1}^{p-1}\Bigl|\sum_{L+1\le x\le L+N}e_p(ax)\Bigr|\Bigr).
\end{equation}
We recall the well-known elementary bound
$$
\frac{1}{p}\sum_{a=1}^{p-1}\Bigl|\sum_{L+1\le x\le L+N}e_p(ax)\Bigr|\ll\log p,
$$
see, for example, the solution to the exercise 11 of Chapter 3
in the book of Vinogradov~\cite{Vin1}. Substituting this
in~\eqref{eqn:Tpdlambda+Kon}, we obtain that
$$
T_p(d,\lambda,L,N)\lesssim d^{29/36}p^{1/18}.
$$
Since $d<p^{\frac{1}{3}+0.001}$, we get
$d^{29/36}p^{1/18}<p^{\frac{1}{3}-0.001}$ and the result follows
in this case.

{\bf Case 2.} $p^{\frac{2}{3}-0.001}<d<p^{\frac{2}{3}+0.001}.$
In this case we denote by $\cT$ the set of integers $x\in
[L+1,L+N]$ for which $x(\bmod p)\in x_0\cG_d$. Then
\begin{equation}
\label{eqn:T=cT}
T_p(d,\lambda,L,N)=|\cT|.
\end{equation}
Clearly, if $x_1,\ldots,x_m\in\cT$, we get that $x_1\cdots
x_m(\bmod p)\in x_0^m\cG_d$. Thus, $|\cT|^m$ is not greater,
than the number of solutions of the congruence
$$
x_1\cdots x_m(\bmod p)\in x_0^m\cG_d,\quad L+1\le x_i\le L+N.
$$
Therefore, for some fixed $\lambda_0\in x_0^m\cG_d$ we have
\begin{equation}
\label{eqn:T m < dR Case 2}
|\cT|^m<d R,
\end{equation}
where $R$ is the number of solutions of the congruence
$$
x_1\cdots x_m\equiv \lambda_0(\bmod p),\quad L+1\le x_i\le L+N.
$$
We express $R$ in terms of character sums and obtain that
$$
R=\frac{1}{p-1}\sum_{\chi}\Bigl(\sum_{L+1\le x\le L+N}\chi(x)\Bigr)^{m}\chi(\lambda_0^{-1}),
$$
where $\chi$ runs through the set of characters modulo $p.$
Separating the term that corresponds to the principal character
$\chi_0$, we get that
$$
R\le \frac{N^m}{p-1}+\max_{\chi\not=\chi_o}\Bigl|\sum_{L+1\le x\le L+N}\chi(x)\Bigr|^m.
$$
We apply Lemma~\ref{lem:Burgess} with $r=5$. Since
$N>0.5p^{\frac{1}{3}-0.001}$, it follows that
$$
\max_{\chi\not=\chi_o}\Bigl|\sum_{L+1\le x\le L+N}\chi(x)\Bigr|\ll N^{59/60}.
$$
Hence, taking $m=200$, we get
$$
R\ll \frac{N^{200}}{p}+\frac{N^{200}}{N^{10/3}}\ll\frac{N^{200}}{p}.
$$
Therefore, from~\eqref{eqn:T m < dR Case 2} we obtain that
$$
|\cT|\ll (d R)^{1/200}\le N\Bigl(\frac{d}{p}\Bigr)^{1/200}\le \Bigl(\frac{p}{d}\Bigr)^{199/200}<p^{\frac{1}{3}-0.0001}.
$$
Hence, substituting this in~\eqref{eqn:T=cT}, we get the desired
estimate.

{\bf Case 3.} $p^{\frac{1}{3}+0.001}<d<p^{\frac{2}{3}-0.001}$.
In particular, we get
$$
N=\lfloor p/d\rfloor\gg p^{\frac{1}{3}+0.001}.
$$

We apply Lemma~\ref{lem:Mont} with
$$
\{\gamma_n\}_n=\Bigl\{\frac{x_0h}{p};\, h\in \cG_d\Bigr\},\quad \alpha=\frac{L+1}{p},\quad \beta=\frac{L+N}{p},\quad K=d.
$$
It follows that
\begin{equation}
\label{eqn:T(d,l,L,N) via Mont}
T_p(d,\lambda,L,N)\ll 1+\frac{1}{d}\sum_{k=1}^d \left|\sum_{h\in \cG_d} e_p(k x_0h)\right|.
\end{equation}
Since $\cG_d$ is cyclic (and therefore consists of all powers of
some element) and $d>p^{\frac{1}{3}+0.001}$, there exists a
subset  $\cU\subset \cG_d$ such that
$$
0.1p^{\frac{1}{3}+0.001}<|\cU|<0.2p^{\frac{1}{3}+0.001},\quad |\cU\cdot \cU|\le 2|\cU|.
$$
Clearly, $r\cG_d=\cG_d$ for any $r\in \cU$. It then follows that
\begin{equation}
\label{eqn:MontApply}
\frac{1}{d}\sum_{k=1}^d \left|\sum_{h\in \cG_d} e_p(k x_0h)\right|=\frac{1}{d|\cU|}\sum_{k=1}^d \sum_{r\in \cU}\left|\sum_{h\in \cG_d} e_p(kr x_0h)\right|.
\end{equation}
Let $I(\mu)$ be the number of solutions of the congruence
$$
kr\equiv \mu\pmod p,\quad k\in \N, \quad k\le d, \quad r\in \cU.
$$
Note that, by Corollary~\ref{cor:Main1}, we have
\begin{equation}
\label{eqn:ku=k1u1}
\sum_{\mu=0}^{p-1}I(\mu)^2\lesssim \frac{|\cU|^{7/4}d}{p^{1/4}}.
\end{equation} Indeed, the left hand side is equal to the number of solutions of the congruence
$$
kr\equiv k_1r_1\pmod p,\quad k,k_1\in \N; \quad k,k_1\le d; \quad r,r_1\in \cU.
$$
Moreover,
$$
p^{1/3}<|\cU|<p^{2/5},\quad |\cU\cdot \cU|<10|\cU|,\quad |\cU|d<p^{\frac{2}{3}-0.001}p^{\frac{1}{3}+0.001}=p.
$$
Thus, we are at the condition of $(ii)$ of
Corollary~\ref{cor:Main1}. Therefore, the
estimate~\eqref{eqn:ku=k1u1} holds.

Now, we use the Cauchy-Schwarz inequality and obtain
\begin{eqnarray*}
\begin{split}\sum_{k=1}^d \sum_{r\in \cU}\left|\sum_{h\in \cG_d} e_p(kr x_0h)\right|=\sum_{\mu=0}^{p-1}I(\mu)\left|\sum_{h\in \cG_d} e_p(\mu x_0h)\right|\\ \le
\Bigl(\sum_{\mu=0}^{p-1}I(\mu)^2\Bigr)^{1/2} \Bigl(\sum_{\mu=0}^{p-1}\left|\sum_{h\in \cG_d} e_p(\mu x_0h)\right|^2\Bigr)^{1/2}\\ \lesssim  \frac{|\cU|^{7/8}d^{1/2}}{p^{1/8}}(pd)^{1/2}
=|\cU|^{7/8}dp^{3/8}.
\end{split}
\end{eqnarray*}
Substituting this in~\eqref{eqn:MontApply}, we obtain
$$
\frac{1}{d}\sum_{k=1}^d \left|\sum_{h\in \cG_d} e_p(k x_0h)\right|\lesssim \frac{p^{3/8}}{|\cU|^{1/8}}\lesssim p^{\frac{1}{3}-0.0001}.
$$
This together with~\eqref{eqn:T(d,l,L,N) via Mont} proves
Theorem~\ref{thm:x^n=lambda}.

We shall now derive Corollary~\ref{cor:x f(x)}. Let $J$ be the
number of solutions of~\eqref{eqn:Th2}. Clearly, if
$\gcd(f(x),p-1)=d$, then~\eqref{eqn:Th2} implies that
$x^{d}\equiv 1 \pmod p.$ Hence,
\begin{equation}
\label{eqn:J= sum d | p-1 Jd}
J=\sum_{d|p-1}J_d,
\end{equation}
where $J_d$ is the number of solutions of the congruence
$$
x^{d}\equiv 1\pmod p; \quad x\in \N, \quad x<p, \quad \gcd(f(x),p-1)=d.
$$
Since $f(x)$ does not have multiple roots, by the Nagell-Ore
theorem (see~\cite{Hux}, even for a stronger form) the set of
$x$ with $f(x)\equiv 0\pmod d$ consists on the union of
arithmetic progressions of the form $x\equiv k\pmod d$ for at
most $d^{o(1)}$ different values of $k$. Thus, for each $d|p-1$
there exists a non-negative integer $k_0<d$ such that
\begin{equation}
\label{eqn:J_d lesssim J_d'}
J_d\lesssim J_d',
\end{equation}
where $J_d'$ is the number of solutions of the congruence
$$
x^{d}\equiv 1\pmod p; \quad x\in \N, \quad x<p, \quad x\equiv k_0\pmod d.
$$
Representing $x=k_0+dy$, we get the congruence
$$
(k_0+dy)^{d}\equiv 1\pmod p; \quad y\in \N\cup\{0\}, \quad y<p/d.
$$
Hence, if we denote by $d_1$ the multiplicative inverse of
$d\pmod p$, we get that $J_d'$ is not greater than the number of
solutions of the congruence
$$
(y+k_0d_1)^{d}\equiv d_1^d\pmod p;\quad y\in\N\cup\{0\},\quad y<p/d.
$$
According to Theorem~\ref{thm:x^n=lambda}, we have $J_d'\lesssim
p^{\frac{1}{3}-c}$ for some absolute constant $c>0$. Combining
this bound with~\eqref{eqn:J_d lesssim J_d'} and~\eqref{eqn:J=
sum d | p-1 Jd}, we conclude the proof.

\section{Proof of Theorem~\ref{thm:Main1}}

We first establish the following statement, based on
Corollary~\ref{cor:Main1}.

\begin{lemma}
\label{lem:Main2} Let $0<\varepsilon<0.01$ be fixed, $\cU\subset
\F_p^*$ be such that $\quad |\cU\cdot\cU|\le 10|\cU|$ and
$$
H=\lfloor p^{1/4+\varepsilon}\rfloor,\quad 2H<|\cU|\le p^{3/8-0.5\varepsilon}.
$$ Then the number $T$ of solutions of the congruence
\begin{equation}
\label{eqn:qxh=qxh}
qxr\equiv q_1x_1r_1\pmod p
\end{equation}
in positive integers $x,x_1$, prime numbers $q,q_1$ and elements
$r,r_1\in\cU$ with
\begin{equation}
\label{eqn:conditions for qxh=qxh}
x,x_1\le H,\quad 0.5|\cU|< q,q_1\le |\cU|
\end{equation}
satisfies
$$
T\lesssim |\cU|^2H.
$$
\end{lemma}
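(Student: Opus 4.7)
The plan is to split the count $T$ according to whether $q=q_1$ or $q\ne q_1$, writing $T=T_{\mathrm{diag}}+T_{\mathrm{off}}$, and to treat the two contributions separately. Under the hypothesis $|\cU\cdot\cU|\le 10|\cU|$, the diagonal will be handled by Corollary~\ref{cor:Main1}, while the off-diagonal will be reduced to a coprime congruence controlled by Lemma~\ref{lem:x=sy geometry}. Let $\cP$ denote the set of primes in $(|\cU|/2,|\cU|]$; by the prime number theorem $|\cP|\ll|\cU|$.

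For the diagonal ($q=q_1$) the factor $q$ cancels, and we must bound the number of $(x,x_1,r,r_1)$ with $xr\equiv x_1 r_1\pmod p$, $x,x_1\le H$, $r,r_1\in\cU$. I would apply Corollary~\ref{cor:Main1}(i) with $n=2$: its hypotheses $|\cU|<p^{2/5}$ and $|\cU|H^2<p$ hold in our range, since $|\cU|\le p^{3/8-\varepsilon/2}<p^{2/5}$ and $|\cU|H^2\le p^{7/8+3\varepsilon/2}<p$ because $\varepsilon<0.01$. This gives $\lesssim|\cU|H$ such quadruples, and summing over the $\lesssim|\cU|$ primes $q\in\cP$ produces $T_{\mathrm{diag}}\lesssim|\cU|^2 H$.

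For the off-diagonal ($q\ne q_1$) the key observation is that both primes satisfy $q,q_1>|\cU|/2>H\ge x,x_1$, so $\gcd(q,q_1)=\gcd(q,x_1)=\gcd(q_1,x)=1$, and hence $\gcd(qx,q_1 x_1)=\gcd(x,x_1)$. Writing $d=\gcd(x,x_1)$, $x=du$, $x_1=du_1$ with $\gcd(u,u_1)=1$ and $u,u_1\le H/d$, the congruence becomes $X'r\equiv Y'r_1\pmod p$ with $X'=qu$, $Y'=q_1u_1$ coprime and both at most $|\cU|H/d$. For fixed $(r,r_1,d)$ with $\mu=r_1/r\pmod p$, Lemma~\ref{lem:x=sy geometry} bounds the number of such coprime pairs satisfying $X'\equiv\mu Y'\pmod p$ by $O(1+(|\cU|H/d)^2/p)$, and because $q>H/d\ge u$ the factorization $X'=qu$ with $q\in\cP$ is unique, so each coprime $(X',Y')$ corresponds to at most one quadruple $(q,u,q_1,u_1)$.

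Summing over $1\le d\le H$ and using $|\cU|^2 H^2\le p^{5/4+\varepsilon}$, we obtain
\[
\sum_{d=1}^{H}\Bigl(1+\frac{(|\cU|H/d)^2}{p}\Bigr)\ll H+\frac{(|\cU|H)^2}{p}\ll H+p^{1/4+\varepsilon}\ll H,
\]
and summing the resulting bound over $(r,r_1)\in\cU^2$ gives $T_{\mathrm{off}}\lesssim|\cU|^2 H$, which together with the diagonal estimate yields $T\lesssim|\cU|^2 H$. The main obstacle is the off-diagonal step, where one must recognize that distinct primes $q,q_1>H$ force $\gcd(qx,q_1 x_1)=\gcd(x,x_1)$ and hence allow the $(qx,q_1 x_1)$-congruence to collapse into a coprime congruence amenable to Lemma~\ref{lem:x=sy geometry}; without this identification, the naive pointwise application of Corollary~\ref{cor:Main1} over all $(q,q_1)\in\cP^2$ loses an unacceptable factor of $|\cU|$.
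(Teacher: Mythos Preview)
Your proof is correct. The diagonal part is identical to the paper's, including the verification of the hypotheses for Corollary~\ref{cor:Main1}(i) with $n=2$.

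For the off-diagonal your route diverges from the paper's. You keep the problem symmetric in $(q,x)$ and $(q_1,x_1)$: you extract $d=\gcd(x,x_1)$, observe that $\gcd(qx,q_1x_1)=\gcd(x,x_1)$ because $q,q_1>H$ are distinct primes, and then for each fixed $(r,r_1,d)$ you invoke Lemma~\ref{lem:x=sy geometry} on the coprime pair $(qu,q_1u_1)$, summing the bound $O(1+(|\cU|H/d)^2/p)$ over $d$. The paper instead breaks the symmetry: it fixes $x_1,r,r_1$ outright (paying a factor $|\cU|^2H$) and rewrites the congruence as $qx/q_1\equiv\text{const}\pmod p$; since $x<q_1$ and $q\ne q_1$ one has $\gcd(qx,q_1)=1$ automatically, and because $(H|\cU|)\cdot|\cU|<p$ the simpler Lemma~\ref{lem:simple u/v=a} gives at most \emph{one} solution, so $T_2\le|\cU|^2H$ with no summation at all. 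Both arguments ultimately rely on the same inequality $H|\cU|^2<p$, so neither extends to a wider range; the paper's version is just shorter because the asymmetric fixing avoids the gcd decomposition and the sum over $d$, while your version is a perfectly valid symmetric alternative.
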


\begin{proof}

We have
$$
T=T_1+T_2,
$$
where $T_1$ is the number of solutions of~\eqref{eqn:qxh=qxh}
satisfying~\eqref{eqn:conditions for qxh=qxh} with the
additional condition $q=q_1$, and $T_2$ is the number of
solutions with $q\not=q_1$. We observe that
$|\cU|<p^{3/8}<p^{2/5}$ and
$$|\cU|H^2<p^{3/8-0.5\epsilon}p^{1/2+2\epsilon}<p^{7/8+3\epsilon/2}<p.$$
Thus, we can apply Corollary~\ref{cor:Main1} with $n=2$ and get
$$
T_1\lesssim |\cU|^2H.
$$
In order to estimate $T_2$,  we fix  $x_1,r,r_1$ such that
$$
T_2\le |\cU|^2H T'_2,
$$
where $T'_2$ is the number of solutions of the congruence
$$
\frac{qx}{q_1}\equiv \frac{x_1r_1}{r}\pmod p
$$
in positive integers $x\le H$ and prime numbers $q,q_1$ with
$$
q\not=q_1, \quad 0.5|\cU|<q,q_1\le |\cU|.
$$
From $x\le H< q_1$, it follows that $\gcd(qx,q_1)=1$. Since
$H|\cU|^2<p$, from Lemma~\ref{lem:simple u/v=a} we get that $xq$
and $q_1$ are uniquely determined. Since $x<q$, the value $xq$
uniquely determines $x$ and $q$. Hence, $T'_2\le 1$, whence
$T_2\le |\cU|^2H$ concluding the proof of our lemma.

\end{proof}

We now proceed to prove Theorem~\ref{thm:Main1}. Assuming
$\varepsilon<0.01$, we define
$$
m_0=\Bigl\lceil\frac{1}{\varepsilon}\Bigr\rceil,\quad L=\lfloor p^{1/m_0}\rfloor,\quad H=\lfloor p^{1/4+\varepsilon}\rfloor,\quad N=\lfloor p^{3/8-0.5\varepsilon}\rfloor.
$$
From Lemma~\ref{lem:Burgess} it follows that there exists
$\delta=\delta(\varepsilon)>0$ such that for any non-principal
character $\chi$ modulo $p$ the following bound holds:
\begin{equation}
\label{eqn:Burg}
\left|\sum_{x\le H}\chi(x)\right|\le H^{1-\delta}.
\end{equation}
Let $\cG'$ be a subset of $\cG$ such that $|\cG'|=N$ and
$|\cG'\cG'|\le 2|\cG'|$. The existence of such a subset is
obvious, since either $\cG$ consists on consecutive powers of a
primitive root or it is a subgroup of $\F_p^*$, which is cyclic.

It suffices to prove that for some
$\delta_0=\delta_0(\varepsilon)>0$ there are
$p+O(p^{1-\delta_0})$ residue classes modulo $p$ of the form
$zxqr\pmod p,$ with positive integers $x,z$, prime numbers $q$
and elements $r$ satisfying
$$
z\le L,\quad x\le H,\quad \frac{N}{2}<q\le N,\quad r\in \cG'.
$$
Let $\Lambda\subset\F_p^*$ be the exceptional set, that is,
assume that the congruence
$$
zxqr\equiv \lambda\pmod p
$$
has no solutions in $\lambda\in \Lambda$ and $z,x,q,r$ as above.
We write this condition in the form of character sums
$$
\frac{1}{p-1}\sum_{\chi}\sum_{z\le L}\sum_{x\le H}\,\sum_{\substack{0.5N<q<N\\ q \,\, {\text {is prime}}}}\sum_{r\in\cG'}
\sum_{\lambda\in\Lambda}\chi(zxqr\lambda^{-1})=0,
$$
where $\chi$ runs through the set of characters modulo $p$.
Separating the term corresponding to the principal character
$\chi=\chi_0$, we get
$$
LHN^2|\Lambda|\lesssim \sum_{\chi\not=\chi_0}\Bigl|\sum_{z\le L}\chi(z)\Bigr| \Bigl|\sum_{x\le H}\,\sum_{\substack{0.5N<q<N\\ q\,\, {\text {is prime}}}}\sum_{r\in\cG'}
\chi(xqr)\Bigr| \Bigl|\sum_{\lambda\in\Lambda}\chi(\lambda)\Bigr|.
$$

Following~\cite{JG, Gar1}, we split the set of nonprincipal
characters $\chi$ into two subsets $\cX_1$ and $\cX_2$ as
follows. To the set $\cX_1$  we allot those characters $\chi$,
for which
$$
\Bigl|\sum_{z\le L}\chi(z)\Bigr|\ge L^{1-0.1\delta},
$$
where $\delta$ is defined from~\eqref{eqn:Burg}. The remaining
characters we include to the set $\cX_2$, these are the
characters that satisfy
$$
\Bigl|\sum_{z\le L}\chi(z)\Bigr| < L^{1-0.1\delta}.
$$
Thus, we have
\begin{equation}
\label{eqn:le W1+W2}
LHN^2|\Lambda|\lesssim W_1+W_2,
\end{equation}
where
$$
W_i=\sum_{\chi\in\cX_i}\Bigl|\sum_{z\le L}\chi(z)\Bigr| \Bigl|\sum_{x\le H}\,\sum_{\substack{0.5N<q<N\\ q\,\, {\text {is prime}}}}\sum_{r\in\cG'}
\chi(xqr)\Bigr| \Bigl|\sum_{\lambda\in\Lambda}\chi(\lambda)\Bigr|.
$$

To deal with $W_1$, we show that the cardinality of $\cX_1$ is
small. We have
$$
|\cX_1|L^{2m_0(1-0.1\delta)}\le \sum_{\chi\in\cX_1}\Bigl|\sum_{z\le L}\chi(z)\Bigr|^{2m_0}\le \sum_{\chi}\Bigl|\sum_{z\le L}\chi(z)\Bigr|^{2m_0}=(p-1)T,
$$
where $T$ is the number of solutions of the congruence
$$
x_1\cdots x_{m_0}\equiv y_1\cdots y_{m_0}\pmod p;\quad x_i,y_j\in \mathbb{N};\quad x_i,y_j\le L.
$$
Since $L^{m_0}<p$, the congruence is converted to an equality
and we have, by the estimate for the divisor function, at most
$L^{m_0+o(1)}$ solutions. Thus,
$$
|\cX_1|L^{2m_0(1-0.1\delta)}\lesssim pL^{m_0+o(1)},
$$
whence, in view of $L^{m_0}=p^{1+o(1)}$, we get
$$
|\cX_1|\lesssim p^{0.2\delta}.
$$
Thus, estimating in $W_1$ the sums over $z,q, r,\lambda$
trivially and applying~\eqref{eqn:Burg} to the sum over $x$, we
get
\begin{equation*}
\begin{split}
W_1=\sum_{\chi\in\cX_1}\Bigl|\sum_{z\le L}\chi(z)\Bigr| \Bigl|\sum_{x\le H}\chi(x)\Bigr|\Bigl|\sum_{\substack{0.5N<q<N\\ q\,\, {\text {is prime}}}}\sum_{r\in\cG'}
\chi(qr)\Bigr| \Bigl|\sum_{\lambda\in\Lambda}\chi(\lambda)\Bigr|\\ \lesssim |\cX_1|\cdot L\cdot N^2\cdot |\Lambda|\max_{\chi\not=\chi_0}\Bigl|\sum_{x\le H}\chi(x)\Bigr|\lesssim p^{0.2\delta}LN^2|\Lambda|H^{1-\delta}.
\end{split}
\end{equation*}
Therefore, since $H>p^{1/4}$ we have, for sufficiently large
$p$, the estimate
$$
W_1< LHN^2|\Lambda|p^{-0.01\delta}.
$$
Inserting this bound into \eqref{eqn:le W1+W2}, we get
\begin{equation}
\label{eqn:LHN2Lambda le W2}
LHN^2|\Lambda|\lesssim W_2.
\end{equation}
We next estimate $W_2$. By the definition of $\cX_2$, we have
\begin{equation}
\label{eqn:W2 le}
W_2\le L^{1-0.1\delta}\sum_{\chi}\Bigl|\sum_{x\le H}\sum_{\substack{0.5N<q<N\\ q\,\, {\text {is prime}}}}\sum_{r\in\cG'}
\chi(xqr)\Bigr| \Bigl|\sum_{\lambda\in\Lambda}\chi(\lambda)\Bigr|.
\end{equation}
Next, we have
$$
\sum_{\chi}\Bigl|\sum_{\lambda\in\Lambda}\chi(\lambda)\Bigr|^2=(p-1)|\Lambda|
$$
and
$$
\sum_{\chi}\Bigl|\sum_{x\le H}\sum_{\substack{0.5N<q<N\\ q\,\, {\text {is prime}}}}\sum_{r\in\cG'}
\chi(xqr)\Bigr|^2=(p-1)T,
$$
where $T$ is the number of solutions of the congruence
$$
xqr\equiv x_1q_1r_1\pmod p,
$$
in positive integers $x,x_1$ prime numbers $q,q_1$ and elements
$r,r_1\in\cG'$ satisfying
$$
x_1, x_2\le H,\quad 0.5N<q, q_1<N,\quad r,r_1\in\cG'.
$$
From Lemma~\ref{lem:Main2} with $\cU=\cG'$ it follows that
$$
T\lesssim N^2H.
$$
Therefore, applying the Cauchy-Schwarz inequality
in~\eqref{eqn:W2 le}, and using~\eqref{eqn:LHN2Lambda le W2}, we
obtain that
$$
L^2H^2N^4|\Lambda|^2\lesssim W_2^2\lesssim L^{2-0.02\delta}(p-1)^2|\Lambda| T\lesssim L^{2-0.02\delta}p^2 |\Lambda| N^2 H.
$$
Thus,
$$
|\Lambda|\lesssim \frac{p^2L^{-0.02\delta}}{HN^2}\lesssim p L^{-0.02\delta}.
$$
Therefore,
$$
|\Lambda|< p^{1-\delta_0}
$$
for some $\delta_0=\delta_0(\epsilon)$, which concludes the
proof.

\section{Proof of Theorem~\ref{thm:Main2}}

The proof follows the same line as the proof of
Theorem~\ref{thm:Main1}, however, instead of
Lemma~\ref{lem:Main2} we shall use Lemma~\ref{lem:Main3} .

\begin{lemma}
\label{lem:Main3} Let $0<\varepsilon<0.01$ be fixed, $\cU\subset
\F_p^*$ be such that $\quad |\cU\cdot\cU|\le 10|\cU|$ and
$$
H=\lfloor p^{1/4+\varepsilon}\rfloor,\quad Q=\lfloor p^{1/4} \rfloor,\quad |\cU|\le p^{1/4-\varepsilon}.
$$
Then the number $T$ of solutions of the congruence
\begin{equation}
\label{eqn:qqxh=qqxh}
q_1q_2xr\equiv q_1' q_2'x'r'\pmod p
\end{equation}
in positive integers $x,x'$, prime numbers $q_1, q_2, q_1',
q_2'$ and elements $r,r'\in\cU$ with
\begin{equation}
\label{eqn:condition for qqxh=qqxh}
x,x'\le H,\quad \frac{Q}{4}< q_1,q'_1< \frac{Q}{2},\quad \frac{Q}{2}< q_2,q'_2< Q,
\end{equation}
satisfies
$$
T\lesssim Q^2H|\cU|.
$$
\end{lemma}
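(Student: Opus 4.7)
The plan is to avoid the case-splitting used in Lemma~\ref{lem:Main2} by instead lumping the three integer variables on each side of~\eqref{eqn:qqxh=qqxh} into a single one and applying Corollary~\ref{cor:Main1}(i) directly. Setting $y = q_1 q_2 x$ and $y' = q_1' q_2' x'$, the constraints~\eqref{eqn:condition for qqxh=qqxh} make $y, y'$ positive integers with $y, y' < Q^2 H /2$, and the congruence~\eqref{eqn:qqxh=qqxh} becomes
$$
y r \equiv y' r' \pmod p.
$$
A given integer $y$ admits at most $\tau_3(y) \le p^{o(1)}$ ordered factorizations into three positive integers, hence at most $p^{o(1)}$ representations of the form $q_1 q_2 x$ with $q_1, q_2$ prime and $x \le H$; the same applies to $y'$. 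Therefore
$$
T \lesssim \#\bigl\{(y, y', r, r') \in \mathbb{N}^2 \times \cU^2 : y, y' \le H^*,\ y r \equiv y' r' \!\!\!\pmod p\bigr\},
$$
where $H^* := \lfloor Q^2 H / 2 \rfloor$.

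I would then bound the right-hand side by applying Corollary~\ref{cor:Main1}(i) to $\cU$ with $H$ replaced by $H^*$ and $n = 1$. The two hypotheses are immediate from the assumptions of the lemma:
$$
|\cU| \le p^{1/4-\varepsilon} < p^{1/3} = p^{1/(2 \cdot 1 + 1)}, \qquad |\cU|\, H^* \le p^{1/4-\varepsilon}\cdot \frac{p^{1/2}\cdot p^{1/4+\varepsilon}}{2} \le \frac{p}{2} < p.
$$
The corollary then yields the count $\lesssim H^*|\cU| \lesssim Q^2 H|\cU|$, which is exactly the bound claimed for $T$.

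Thus the proof is essentially immediate; there is no real obstacle. The parameter regime $|\cU| \le p^{1/4-\varepsilon}$, $Q \asymp p^{1/4}$, $H \asymp p^{1/4+\varepsilon}$ of Lemma~\ref{lem:Main3} is chosen exactly so that the lumped integer $y$ has size at most $p/(2|\cU|)$, allowing Corollary~\ref{cor:Main1}(i) to apply with the most favourable exponent $n = 1$. By contrast, the analogous lumping strategy in Lemma~\ref{lem:Main2} would require $|\cU|^2 H < p$, which fails in the larger range $|\cU|$ up to $p^{3/8-0.5\varepsilon}$ of that lemma; this is precisely why Lemma~\ref{lem:Main2}'s proof had to decompose $T$ according to whether the two primes coincide.
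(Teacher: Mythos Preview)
Your proof is correct and genuinely streamlines the paper's argument. The paper instead splits $T=T_1+T_2+T_3+T_4$ according to which of the equalities $q_1=q_1'$, $q_2=q_2'$ hold, then treats each case separately: $T_1$ by Corollary~\ref{cor:Main1}(i) (applied to the residual congruence $xr\equiv x'r'$), $T_2$ by Lemma~\ref{lem:simple u/v=a} (using that $\gcd(q_1q_2,q_1'q_2')=1$ when both pairs differ), and $T_3+T_4$ by exactly the lumping trick you employ---but lumping only \emph{one} prime into the integer variable, writing $x_1=qx$, $x_2=q'x'$ and applying Corollary~\ref{cor:Main1}(i) with $H$ replaced by $QH$.

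Your observation is that in the parameter regime of Lemma~\ref{lem:Main3} one can go further and absorb \emph{both} primes, because $|\cU|\cdot Q^2H\le p/2<p$ still holds. This makes the case analysis unnecessary and reduces the whole lemma to a single invocation of Corollary~\ref{cor:Main1}(i) plus a divisor bound. The paper's decomposition buys nothing here; it is presumably written in parallel with Lemma~\ref{lem:Main2}, where, as you correctly note, full lumping fails (one would need $|\cU|^2H<p$ together with $|\cU|<p^{1/3}$, and the latter is violated for $|\cU|$ near $p^{3/8}$). Your closing diagnosis of why the two lemmas behave differently is on the mark.
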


Let us prove the lemma. We have
\begin{equation}
\label{eqn:T=T1+T2+T3+T4}
T=T_1+T_2+T_3+T_4,
\end{equation}
where $T_1$ is the number of solutions of~\eqref{eqn:qqxh=qqxh}
satisfying~\eqref{eqn:condition for qqxh=qqxh} and with the
additional condition $q_1=q_2,\, q_1'=q_2',$  $T_2$ is the
number of solutions with $q_1\not=q_1',\, q_2\not=q_2'$,  $T_3$
is the number of solutions with $q_1=q_1',\, q_2\not=q_2'$ and
$T_4$ is the number of solutions with $q_1\not=q_1',\,
q_2=q_2'$.

We have $T_1\le Q^2 T_1'$, where $T_1'$ is the number of
solutions of the congruence
$$
xr\equiv x'r'\pmod p;\quad x,x'\le H,\quad r,r'\in\cU.
$$
Applying Corollary~\ref{cor:Main1} with $n=1$ or $n=2$, we get
that $T_1'\lesssim |\cU|H$. Therefore,
\begin{equation}
\label{eqn:T1 lesssim}
T_1\lesssim Q^2H|\cU|.
\end{equation}

To estimate $T_2$, we fix $x,r,x',r'$ and see that
$$
T_2\lesssim |\cU|^2|H|^2 T_2',
$$
where $T_2'$ is the number of solutions of the congruence
$$
\frac{q_1q_2}{q_1'q_2'}\equiv \frac{x'r'}{xr}\pmod p
$$
in prime numbers $q_1,q_1',q_2,q_2'$ with
$$
\frac{Q}{4}< q_1,q'_1< \frac{Q}{2},\quad \frac{Q}{2}< q_2,q'_2< Q,\quad q_1\not=q_1',\quad q_2\not=q_2'.
$$
Since $\gcd(q_1q_2,q_1'q_2')=1$ and $Q^4<p$, it follows from
Lemma~\ref{lem:simple u/v=a} that the numbers $q_1q_2$ and
$q_1'q_2'$ are uniquely determined. Since $q_1<q_2$ and
$q_1'<q_2'$, this implies that in fact all the prime numbers
$q_1,q_2,q_1',q_2'$ are uniquely determined. Therefore, $T_2'\le
1$ and we get
\begin{equation}
\label{eqn:T2 lesssim}
T_2\lesssim |\cU|^2|H|^2\lesssim Q^2H|\cU|.
\end{equation}

In order to estimate $T_3$ and $T_4$, we note that
$$
T_3+T_4\le QT_5,
$$
where $T_5$ is the number of solutions of the congruence
$$
qxr\equiv q'x'r'\pmod p,
$$
in positive integers $x,x'$, prime numbers $q,q'$ and elements
$r,r'\in \cU$ with
$$
x,x'\le H,\quad \frac{Q}{4}<q,q'<Q,\quad q\not=q'.
$$
We introduce variables $x_1,x_2$ with
$$
x_1=qx,\quad x_2=q'x'
$$
and note that
$$
x_1r\equiv x_2r'\pmod p;\quad x_1, x_2\le QH, \quad  r,r'\in\cU.
$$
We can apply Corollary~\ref{cor:Main1} with $n=1$ and $H$
substituted by $QH$ (clearly, the conditions of
Corollary~\ref{cor:Main1} are satisfied). It then follows that
there are at most $QH|\cU|p^{o(1)}$ possibilities for the
quadruple $(x_1,x_2,r,r')$. Each such quadruple determines
$q,x,q',x'$ with at most $p^{o(1)}$ possibilities, because
$q,x,q',x'$ are divisors of $x_1x_2<p.$  Therefore, we get that
$$
T_5\lesssim QH|\cU|,
$$
implying that
$$
T_3+T_4\lesssim Q^2H|\cU|.
$$
Inserting this estimate together with~\eqref{eqn:T1 lesssim}
and~\eqref{eqn:T2 lesssim} into~\eqref{eqn:T=T1+T2+T3+T4}, we
conclude the proof of our lemma.

Now we proceed to prove Theorem~\ref{thm:Main2}. Assuming
$\varepsilon<0.01$, we define
$$
m_0=\Bigl\lceil\frac{1}{\varepsilon}\Bigr\rceil,\quad L=\lfloor p^{1/m_0}\rfloor,\quad H=\lfloor p^{1/4+\varepsilon}\rfloor,\quad Q=\lfloor p^{1/4}\rfloor,\quad  N=\lfloor p^{1/4-\varepsilon}\rfloor.
$$
Following the proof of Theorem~\ref{thm:Main1}, we denote by
$\delta=\delta(\varepsilon)>0$ a positive constant such that for
any non-principal character $\chi$ modulo $p$ the following
bound holds:
\begin{equation}
\label{eqn:BurgT3}
\left|\sum_{x\le H}\chi(x)\right|\le H^{1-\delta}.
\end{equation}
We again let $\cG'$ be a subset of $\cG$ such that $|\cG'|=N$
and $|\cG'\cG'|\le 2|\cG'|$.

It suffices to prove that for some
$\delta_0=\delta_0(\varepsilon)>0$ there are
$p+O(p^{1-\delta_0})$ residue classes modulo $p$ of the form
$zxq_1q_2r\pmod p,$ with positive integers $z,x$, prime numbers
$q_1,q_2$ and elements $r$ satisfying
$$
z\le L,\quad x\le H,\quad \frac{Q}{4}<q_1\le \frac{Q}{2},\quad \frac{Q}{2}<q_2\le Q,\quad r\in \cG'.
$$
Let $\Lambda\subset\F_p^*$ be the exceptional set, that is,
assume that the congruence
$$
zxq_1q_2r\equiv \lambda\pmod p
$$
has no solutions in $\lambda\in \Lambda$ and $z,x,q_1,q_2,r$ as
above. Following the proof of Theorem~\ref{thm:Main1}, we derive
that
$$
LHQ^2N|\Lambda|\lesssim \sum_{\chi\not=\chi_0}\Bigl|\sum_{z\le L}\chi(z)\Bigr| \Bigl|\sum_{x\le H}\,\sum_{\substack{Q/4<q_1<Q/2\\ Q/2<q_2<Q\\ q_1, q_2\,\, {\text
{are primes}}}}\sum_{r\in\cG'}
\chi(xq_1q_2r)\Bigr| \Bigl|\sum_{\lambda\in\Lambda}\chi(\lambda)\Bigr|.
$$

We define the set of characters  $\cX_1$ and $\cX_2$ exactly the
same way as in the proof of Theorem~\ref{thm:Main2} and write
\begin{equation}
\label{eqn:le W1+W2 T3}
LHQ^2N|\Lambda|\lesssim W_1+W_2,
\end{equation}
where
$$
W_i=\sum_{\chi\in\cX_i}\Bigl|\sum_{z\le L}\chi(z)\Bigr| \Bigl|\sum_{x\le H}\,\sum_{\substack{Q/4<q_1<Q/2\\ Q/2<q_2<Q\\ q_1, q_2\,\, {\text {are primes}}}}\sum_{r\in\cG'}
\chi(xq_1q_2r)\Bigr| \Bigl|\sum_{\lambda\in\Lambda}\chi(\lambda)\Bigr|.
$$
From the proof of Theorem~\ref{thm:Main1} it follows that
$$
|\cX_1|\lesssim p^{0.2\delta}.
$$
Thus, estimating in $W_1$ the sums over $z, q_1, q_2,r,\lambda$
trivially and applying~\eqref{eqn:BurgT3} to the sum over $x$,
we get
$$
W_1< LHQ^2N|\Lambda|p^{-0.01\delta}.
$$
Inserting this bound into \eqref{eqn:le W1+W2 T3}, we get
$$
LHQ^2N|\Lambda|\lesssim W_2.
$$
Following the argument of the proof of Theorem~\ref{thm:Main1}
we have
$$
L^2H^2Q^4N^2|\Lambda|^2\lesssim W_2^2\lesssim L^{2-0.02\delta}(p-1)^2|\Lambda| T,
$$
where $T$ is the number of solutions of the congruence
$$
xq_1q_2r\equiv x'q_1'q_2'r'\pmod p,
$$
in positive integers $x,x'$ prime numbers $q_1,q_2,q_1',q_2'$
and elements $r,r_1\in\cG'$ satisfying
$$
x_1, x_2\le H,\quad \frac{Q}{4}<q_1, q_1'<\frac{Q}{2},\quad \frac{Q}{2}<q_1, q_1'<Q, \quad r,r_1\in\cG'.
$$
From Lemma~\ref{lem:Main3} with $\cU=\cG'$ it follows that
$$
T\lesssim HQ^2N.
$$
Therefore,
$$
L^2H^2Q^4N^2|\Lambda|^2\lesssim L^{2-0.02\delta}p^2 H Q^2 |\Lambda|.
$$
Thus,
$$
|\Lambda|\lesssim\frac{p^2}{HQ^2N} L^{-0.02\delta},
$$
whence
$$
|\Lambda|<p^{1-\delta_0}
$$
for some $\delta_0=\delta_0(\epsilon)$. This finishes the proof
of Theorem~\ref{thm:Main2}.

Address of the authors:\\

J. Cilleruelo, Instituto de Ciencias Matem\'{a}ticas
(CSIC-UAM-UC3M-UCM) and Departamento de Matem\'{a}ticas,
Universidad Aut\'{o}noma de Madrid, Madrid-28049, Spain.

Email:{\tt franciscojavier.cilleruelo@uam.es}.

\vspace{1cm}

M.~Z.~Garaev, Centro de Ciencias Matem\'{a}ticas,  Universidad
Nacional Aut\'onoma de M\'{e}xico, C.P. 58089, Morelia,
Michoac\'{a}n, M\'{e}xico,

Email:{\tt garaev@matmor.unam.mx}

\end{document}